\documentclass[12pt,final,reqno]{amsart}
\usepackage{ntung}

\usepackage{ntung_commands}

\DeclareMathOperator{\ER}{ER}

\usepackage{thm-restate}

\begin{document}

\title{Coloring sparse random Cayley graphs}
\author{Nathan Tung}
\address{Stanford University, CA 94305, USA}
\email{ntung@stanford.edu}
\begin{abstract}
It is shown that there exists $c > 0$ so that the Cayley graph over any finite abelian group $Z$ generated by $c \log |Z|$ random elements is properly 3-colorable with high probability (as $|Z| \to \infty$). This is asymptotically tight and improves the best-known bound due to Alon of $\frac{1}{4}\log \log |Z|$ elements. It also settles the abelian case of Alon's suggestion that a bound of $c \log |G|$ may hold for any finite solvable group $G$.
\end{abstract}
\maketitle

\section{Introduction}

 Let $Z$ be an arbitrary abelian group of size $N$. Let $S \subset Z$ be formed by choosing uniformly and independently $k$ elements of $Z$ with replacement. Then $(Z,k)$ will denote the undirected random Cayley graph on vertex set $Z$ in which distinct vertices $x,y\in Z$ are adjacent if and only if $x - y \in S\cup -S$; loops are ignored and multiplicities are suppressed. The goal of this paper is to understand the typical chromatic number $\chi(Z,k)$ of $(Z,k)$ in the sparse regime $k=\Theta(\log N)$ as $N \to \infty$. This line of inquiry was initiated by Alon in \cite{chromcay}, motivated by applications to information theory and the construction of expander graphs. There it was shown that the threshold for having bounded chromatic number lies at $k = \Theta(\log N)$ when $Z$ is the cyclic group $\ZZ_N$ or an elementary $p$-group $\ZZ_p^n$. The difficulty in such a result lies in the lower bound; that is, showing that one may take $k = \Omega(\log N)$ random generators and still have bounded chromatic number with high probability. A similar result was shown by Bourgain in the infinitary setting in \cite{bourgain}, where it was shown that a random subset of the integers of roughly logarithmic density is not intersective with high probability. Alon's techniques are specialized to cyclic and elementary $p$-groups, and the general bound of $\frac 1 4 \log \log N$ for any abelian $Z$ follows from finding the largest cyclic subgroup and a projection trick. This was not expected to be tight, however, and Alon suggested in the last section of \cite{chromcay} that if $k \le 0.01 \log N$ generators are sampled from any solvable group $G$, then it seems plausible that $\chi(G,k) \le 3$ with high probability (where $\abs{G} = N$ and $(G,k)$ is defined with multiplication instead of addition). We confirm this for abelian groups.

 \begin{theorem}\label{thm:main}
    Let $Z$ be a finite abelian group of size $N$. Then $\chi(Z,\frac{1}{64} \log N) \le 3$ with high probability, i.e., with probability $1-o(1)$ as $N \to \infty$.
\end{theorem}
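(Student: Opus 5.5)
The plan is to reduce 3-colorability to finding a homomorphism-like map into a circle. The basic tool is the following: if $\phi\colon Z \to \RR/\ZZ$ is a group homomorphism and every generator $s \in S$ satisfies $\phi(s) \in [1/3, 2/3] \pmod 1$, then coloring $x$ by which third of the circle $\phi(x)$ lies in is proper. Indeed, whenever $x - y = \pm s$, the points $\phi(x)$ and $\phi(y)$ differ by an element of $[1/3,2/3]$, so they cannot lie in the same half-open arc of length $1/3$. Characters $\chi \in \widehat{Z}$ take values in $\frac{1}{m}\ZZ/\ZZ$, where $m$ is the order of $\chi$. The task is therefore to show that, with high probability, some character sends all $k$ random generators into the middle third.

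For a fixed character $\chi$ of order $m \ge 3$, each random $s$ satisfies $\chi(s) \in [1/3,2/3]$ with probability about $1/3$; the same holds for $m=2$, where this probability is $1/2$. Hence the expected number of "good" characters is about $N \cdot 3^{-k}$, which tends to infinity when $k = \frac{1}{64}\log N$. The plan is a second moment argument over a well-chosen family $\mathcal{F}$ of characters. We need
\[
\Pr[\chi, \psi \text{ both good}] \approx \Pr[\chi\text{ good}]\,\Pr[\psi\text{ good}]
\]
for most pairs $\chi,\psi \in \mathcal{F}$. For a single $s$ uniform in $Z$, the pair $(\chi(s),\psi(s))$ is uniform on the image subgroup of $(\chi,\psi)\colon Z \to (\RR/\ZZ)^2$. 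If that image is a large, "non-degenerate" subgroup of the torus, then the joint probability is close to $1/9$. Degeneracy happens when $\psi$ is close to a small multiple $a\chi$ in the sense that the image lies near a line of small slope. We would quantify this with Fourier analysis on the finite subgroup of $(\RR/\ZZ)^2$: the error is controlled by sums of $\widehat{1_{[1/3,2/3]}}(r)\widehat{1_{[1/3,2/3]}}(t)$ over the annihilator $\{(r,t) : r\chi + t\psi = 0\}$. Since the Fourier coefficients decay like $1/|r|$, a pair is bad only if $r\chi + t\psi = 0$ for some small nonzero $(r,t)$.

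The main obstacle is counting bad pairs uniformly over all abelian $Z$. This is where the general structure, rather than just cyclic groups, hurts us. I would first pass to a subgroup of characters that avoids small-order torsion, for instance by restricting to characters of order divisible only by large primes, or to a suitable quotient. If most of $N$ comes from small primes, such as $Z = \ZZ_2^n$, then the $m=2$ case works directly: the good event is $\chi(s)=1/2$ for all $s$, which is the classical random linear-functional argument with probability $2^{-k}$. In general I would decompose $Z = \bigoplus_p Z_p$ and pick a prime, or a cyclic-factor piece, carrying at least a $(\log N)^{-1}$-type share, or handle it via $m$-torsion directly. Given such a piece, for a fixed $\chi$ the number of $\psi$ with $r\chi + t\psi = 0$ for some $|r|,|t| \le T$ is at most $T^2 \cdot |\widehat{Z}[t]|$-ish. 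This forces bounding the $t$-torsion, and that is precisely why one cannot take all characters naively.

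The generous constant $1/64$ leaves room to lose the crude factors. The truncation $T = N^{\epsilon}$, the torsion counts, and the passage to a subgroup of size $N^{1/2}$ or so each cost only a constant factor in the exponent. Chebyshev's inequality then gives a good character with probability $1-o(1)$, and hence $\chi(Z,\frac{1}{64}\log N)\le 3$.
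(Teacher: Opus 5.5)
The part of your plan that handles groups in which most characters have large order is essentially the paper's torsion-sparse argument: a second moment over characters $z\mapsto x\cdot z$, correlations read off from the annihilator $\{(r,t): rx+ty=0\}$, and bad pairs counted through $Z[t]$.

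\textbf{The gap: the torsion-dense case.} The proposal does not actually handle the complementary case. Projecting onto a primary component, a cyclic factor, or an elementary abelian quotient $Z'$ only helps if $|Z'|=N^{\Omega(1)}$. A piece carrying a $(\log N)^{-1}$-type (or even $(\log\log N)^{-1}$-type) share of $\log N$ has $\log|Z'|=o(k)$. Then the expected number of good characters of $Z'$, about $|Z'|3^{-k}$, tends to $0$, and by \Cref{prop:hoffman} the projected Cayley graph is w.h.p.\ not even $3$-colorable. This is precisely the loss behind Alon's $\frac14\log\log N$ bound.

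Concretely, take $Z=\bigoplus_{p\le P}\ZZ_p^{n_p}$ with primary parts of roughly equal size and $\prod_{p\le P}p\le(\log N)^4$, so $P\approx 4\log\log N$ and the number of primes tends to infinity. Every primary component and every elementary abelian quotient then has size $N^{o(1)}$. Every character has order at most $(\log N)^4$. So your Fourier criterion cannot certify a single pair as good, because $(m,0)$ with $m\le(\log N)^4$ always lies in the annihilator. The clause ``handle it via $m$-torsion directly'' is where all the work lies, and nothing in the proposal controls the dependent pairs there.

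\textbf{The missing idea.} In this regime, drop Fourier decay and compute correlations exactly among surjective homomorphisms $f\colon Z\to\ZZ_M$. Here $M$ is a well-chosen divisor of some $a\le(\log N)^4$ with $Z[a]\ge N^{1/4}$; the case split is whether such an $a$ exists.
\begin{itemize}
\item If $K=\ker f$ and $g(K)=d\ZZ_M$, then conditioned on $f(z)$ the value $g(z)$ is uniform on a coset of $d\ZZ_M$. Hence $\bigl|\Pr[g(z)\in I\mid f(z)\in I]-\alpha\bigr|\le d/M$.
\item By \Cref{lem:highrankbad}, at most $d\,Z[M/d]$ surjections $g$ have $g(K)\subseteq d\ZZ_M$.
\item The dependent pairs therefore contribute $O\bigl((\EE X)^2\sum_{1\ne d\mid M} d\,\frac{Z[M/d]}{Z[M]}e^{4kd/M}\bigr)$.
\item To make this negligible, $M$ is chosen via \Cref{lem:powerpicker} (a descending divisor chain) so that $\log Z[M]-\log Z[t]\ge\frac{\log Z[a]}{4t}+\sqrt{\log N}$ for every proper divisor $t$ of $M$.
\end{itemize}
This choice of $M$ matters. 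Take $Z=\ZZ_2^n\oplus\ZZ_q$ with $q\ge5$ prime and $a=2q$. Any two surjections onto $\ZZ_{2q}$ have reductions modulo $q$ that are scalar multiples of one another. A $\gg 1/q$ fraction of pairs then have joint success probability a constant factor above $\alpha^2$ per generator, and the second moment blows up; $M=2$ works instead.

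\textbf{A secondary issue: the cutoff.} Your cutoff $T=N^{\epsilon}$ is too generous. It would demand torsion-sparseness up to $N^{\epsilon}$, leaving a dense case with $a$ as large as $N^{\epsilon}$ that the modulus argument cannot absorb. Since the per-generator error only needs to be $o(1/k)$, a polylogarithmic cutoff suffices. The paper smooths $I$ with a heat kernel at scale $\delta=(k\log N)^{-1}$, which also repairs the non-summability of the $1/|r|$ coefficients over the annihilator.
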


The result follows from two similar results with additional assumptions on the group that together cover all finite abelian groups. Denote, for $a \in \NN$,
$$
Z[a] \coloneqq |\set{z \in Z: az = 0}|.
$$
This is the number of elements of order dividing $a$. For example, if $Z = \ZZ_N$ is cyclic then $\ZZ_N[a]$ is the greatest common divisor of $a$ and $N$. If $Z[a]$ is small for every sufficiently small $a$ (the torsion-sparse case), we use large-order characters to construct a $3$-coloring. If $Z[a]$ is large for some small $a$ (the torsion-dense case), homomorphisms to a suitable small cyclic group are used instead. These cases are outlined in more detail in \Cref{sec:strat}.

The intermediate result in the torsion-dense regime is simply the same conclusion restricted to torsion-dense groups.
\begin{restatable}[Torsion-dense coloring]{theorem}{tdense}\label{thm:highrank}
     Let $Z$ be of size $N$ and suppose there exists a positive integer $a$ such that $a \le (\log N)^4$ and $Z[a] \ge N^{1/4}$. Then $\chi(Z,\frac{1}{64}\log N) \le 3$ with high probability.
\end{restatable}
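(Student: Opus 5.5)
Since $Z[a]\ge N^{1/4}$ with $a\le(\log N)^4$, the subgroup $Z[a]$ of $a$-torsion elements is large, and $Z$ has many homomorphisms to a small cyclic group. The idea is to pick a homomorphism $\varphi\colon Z\to\ZZ_m$ (with $m$ a divisor of some power-sized quantity related to $a$, small compared to $N$) such that every generator $s\in S$ satisfies $\varphi(s)\in\{\lceil m/3\rceil,\dots,\lfloor 2m/3\rfloor\}$ modulo $m$, up to sign. Then the coloring $x\mapsto\lfloor 3\varphi(x)/m\rfloor\in\{0,1,2\}$ is proper. Indeed, if $x-y=\pm s$ then $\varphi(x)-\varphi(y)$ lies in the middle third mod $m$, so $\varphi(x)$ and $\varphi(y)$ cannot lie in the same third. (Getting the exact endpoint conventions right needs $m\ge 3$, and possibly a slightly shrunk middle interval.)

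\textbf{Step 1: structure.} Decompose $Z\cong\bigoplus_i\ZZ_{d_i}$. The condition $Z[a]=\prod_i\gcd(a,d_i)\ge N^{1/4}$ forces many cyclic factors with $\gcd(a,d_i)>1$. Since each factor contributes at most $a\le(\log N)^4$, there are at least $r\gtrsim \log N/(16\log\log N)$ of them. By pigeonhole over the at most $(\log N)^4$ possible values of $\gcd(a,d_i)$, or better over a prime $p\mid a$, we find a prime power $q=p^e$ (or a prime $p$) and a family $\Phi$ of homomorphisms $Z\to\ZZ_q$ of size $q^{r'}$ forming a group $\widehat{Z}_q\cong \ZZ_q^{r'}$, with $q^{r'}\ge N^{c}$ for some absolute $c$ (for example $N^{1/4}$ divided by the pigeonhole loss). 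If $q\ge3$, use $m=q$ directly. If $p=2$ with $q=2$, pass to $\ZZ_4$-valued maps or combine with another factor. This small-prime case is a nuisance to handle separately, perhaps by using the target $\ZZ_2^{2}$ with a 3-coloring of its own Cayley graph in place of the interval coloring.

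\textbf{Step 2: second moment / union bound.} For a uniformly random $\varphi\in\Phi$ and a fixed $s$, the value $\varphi(s)$ is uniform on the subgroup $\ZZ_{q'}\le\ZZ_q$ generated by the possible images, where $q'$ is determined by the order data of $s$. So $\Pr[\varphi(s)\in\text{middle}]$ is about $1/3$ when $q'$ is large, but is $0$ when $\varphi(s)$ is forced to be $0$. First, show that with high probability every $s\in S$ has $q'$ reasonably large, and in particular is not annihilated by all of $\Phi$. The set of $s$ killed by $\Phi$ is a subgroup of index $q^{r'}\ge N^{c}$, so a union bound over $k=\frac1{64}\log N$ elements succeeds. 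Then let $X$ count the $\varphi\in\Phi$ that are good for all $s\in S$. We have $\mathbb E_S X\approx|\Phi|\,\rho^{k}$ with $\rho\ge c_0>0$, which gives $\mathbb E X\ge N^{c}N^{-\frac{1}{64}\log(1/c_0)}\to\infty$ once the constants are checked. Finally, run a second moment computation: $\operatorname{Var}X$ is controlled by pairs $(\varphi,\psi)$, and for $\varphi,\psi$ in "general position" (independent images in $\ZZ_q^2$) the events decorrelate, so $\mathbb E X^2\le(1+o(1))(\mathbb E X)^2$. Chebyshev then gives $X>0$ with high probability.

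\textbf{Main obstacle.} The delicate part is the second moment. Pairs $\varphi,\psi$ whose joint map $Z\to\ZZ_q^2$ is not surjective are correlated. We must bound their number (about $|\Phi|\cdot q^{O(1)}\cdot$ subgroups) against the gain $\rho^{-k}$. Because $q\le(\log N)^4$ and $\rho^{-k}=N^{O(1/64)}$ is small relative to $|\Phi|\ge N^{c}$, this should be affordable, and the constant $\frac1{64}$ is chosen for exactly this balance. I would first try to make this work uniformly in $q$, treating very small $q$ (such as $2$ or $3$) separately.
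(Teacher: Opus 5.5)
There is a genuine gap in Step~1, and it breaks the plan itself, not just the constants. Pigeonholing $\log Z[a]=\sum_{p\mid a}\log Z[p^{v_p(a)}]$ over primes loses a factor $\omega(a)$, which is unbounded: it can be of order $\log\log N/\log\log\log N$. So $q^{r'}\ge N^{c}$ with an absolute $c$ is false in general.

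Concretely, let $Z=\bigoplus_{p\le P}\ZZ_p^{n_p}$ with $n_p=\lfloor n/\log p\rfloor$ and $P=3\log n$. Then $\log N=(1+o(1))\,n\,\pi(P)$, $a=\prod_{p\le P}p=n^{3+o(1)}\le(\log N)^4$, and $Z[a]=N$, so the hypotheses hold. But every homomorphism $Z\to\ZZ_{p^e}$ kills all factors at primes $p'\neq p$. Hence there are at most $p^{n_p}\le e^{n}=N^{(1+o(1))/\pi(P)}=N^{o(1)}$ of them. For any $\varphi\colon Z\to\ZZ_q$, at most a $2/3$ fraction of $z$ have $\varphi(z)$ in your middle interval, since the image is a subgroup and order $3$ is the worst case. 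So, uniformly in the prime power $q$, $\EE X\le N^{o(1)}(2/3)^{\log N/64}\to 0$. The first moment already fails, and with high probability no suitable homomorphism into any $\ZZ_{p^e}$ exists.

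The missing idea is to use homomorphisms into $\ZZ_M$ for composite $M\mid a$. By the Chinese remainder theorem these exploit the torsion at all primes simultaneously. In this example $M=a$ gives $(1-o(1))N$ surjections.

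Second, once you allow such moduli (or even $p^e$ with $e\ge 2$), your count of correlated pairs as $|\Phi|\,q^{O(1)}$ is wrong; it only holds for prime moduli. Fix a surjection $\varphi$ with kernel $K$. The homomorphisms $\psi$ with $\psi(K)\subseteq d\ZZ_M$ number $d\,Z[M/d]$ (\Cref{lem:highrankbad}), and this can be a positive power of the family size. For instance, with $\Phi\cong\ZZ_{p^2}^{r}$, the pairs with $\psi\in\varphi+p\Phi$ already number $|\Phi|^{3/2}$. For such pairs, $\Pr[\psi(s)\in I\mid \varphi(s)\in I]$ can exceed $\alpha$ by order $d/M$, a gain of $N^{\Theta(d/M)}$ over $k$ generators.

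So the second moment closes only if $Z[M]/Z[M/d]$ beats $e^{4kd/M}$ with room to spare for every $1\neq d\mid M$. This is exactly the gap condition \eqref{eq:ugap}. The paper obtains it from \Cref{lem:powerpicker} by a descending-chain argument, and the same condition with $t=1$ gives $Z[M]\ge Z[a]^{1/4}\ge N^{1/16}$ for the first moment.

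The choice of modulus genuinely matters. For $Z=\ZZ_2^n\oplus\ZZ_4$, take all surjections onto $\ZZ_4$. Then almost every pair has $d=2$, with conditional probability $1/2$ instead of $\alpha=1/4$, so $\EE X^2\approx 2^k(\EE X)^2$; taking $M=2$ instead works. Your proposal has no mechanism for selecting the modulus, and "should be affordable" does not address this.
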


In the torsion-sparse regime we are able to capture good dependence of the number of generators on the number of colors, which can be taken to grow with $N$.
\begin{restatable}[Torsion-sparse coloring]{theorem}{td}\label{thm:lowrank}
     Let $\chi = \chi(N)$ be a sequence of integers such that $3 \le \chi \le N^{1/11}$, and $k = -(9\log(1-2/\chi))^{-1} \log N$ (approximately $\frac{\chi}{18}\log N$ for large $\chi$). Suppose $Z$ is an abelian group of size $N$ such that for every positive integer $a\le (k\log N)^2$ it holds that $Z[a] < N^{1/4}$. Then $\chi(Z,k) \le \chi$ with high probability.
\end{restatable}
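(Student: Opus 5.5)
We color through a character. Fix a character $\phi: Z \to \mathbb{R}/\mathbb{Z}$. Split the circle into $\chi$ arcs $I_j=[j/\chi,(j+1)/\chi)$ and color $x$ by the arc containing $\phi(x)$. If every generator $s\in S$ has $\|\phi(s)\| \in (1/\chi, 1-1/\chi)$, i.e.\ $\phi(s)$ avoids the closed arc of length $2/\chi$ around $0$, then $x$ and $x\pm s$ never share an arc, so the coloring is proper. So it suffices to show that with high probability there is a character $\phi$ with $\phi(s)$ outside the bad arc $B=[-1/\chi,1/\chi]$ for all $k$ random $s$. This happens for a fixed $\phi$ with probability about $(1-2/\chi)^k = N^{-1/9}$ if $\phi(s)$ is nearly uniform, which matches the choice of $k$. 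Hence the plan is a second-moment argument over a large family $\Phi$ of characters of large order.

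\textbf{Step 1 (the family).} Let $X=\sum_{\phi\in\Phi}\mathbf 1[\phi(s_i)\notin B\ \forall i]$. For $\phi$ of order $m$, $\phi(s)$ is uniform on $\frac1m\mathbb{Z}/\mathbb{Z}$, so
\[
\Pr[\phi(s)\notin B] = 1-2/\chi + O(1/m).
\]
Choose $\Phi$ to be characters of order $m \ge (k\log N)^2$. Then $(1-2/\chi+O(1/m))^k = (1-2/\chi)^k(1+o(1))$, since $k\chi/m = o(1)$ when $\chi\le N^{1/11}$ and $m$ is large. The number of characters of order dividing $a$ equals $Z[a]$, by duality $\hat Z\cong Z$. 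The torsion-sparse hypothesis therefore says that all but at most $\sum_{a\le (k\log N)^2} Z[a] \le (k\log N)^2 N^{1/4} = o(N)$ characters have order exceeding $(k\log N)^2$. So $|\Phi|=(1-o(1))N$ and
\[
\mathbb E X \approx N\cdot N^{-1/9}\to\infty.
\]

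\textbf{Step 2 (second moment).} We need $\mathbb E X^2 \le (1+o(1))(\mathbb E X)^2$. For a pair $(\phi,\psi)$, the vector $(\phi(s),\psi(s))$ is uniform on the image of $(\phi,\psi): Z\to(\mathbb{R}/\mathbb{Z})^2$. This image is a subgroup $H$ of the torus, and the pair term is $p_{\phi\psi}^k$, where $p_{\phi\psi}=\Pr[(\phi(s),\psi(s))\notin B\times\mathbb{R}\cup\mathbb{R}\times B]$. If $H$ is well-distributed, meaning no nontrivial relation $u\phi+v\psi=0$ with small $|u|,|v|$, then $p\approx(1-2/\chi)^2$ up to a relative error much smaller than $1/k$, and these pairs contribute $(1+o(1))(\mathbb E X)^2$.

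The obstacle is the correlated pairs, where $u\phi = v\psi$ (that is, $u\phi-v\psi=0$) for small integers $u,v$. We bound them crudely, using $p_{\phi\psi}\le \Pr[\phi(s)\notin B]\approx 1-2/\chi$, but we must also count them. For fixed $\phi$ and fixed $(u,v)$ with $|u|,|v|\le L$, the number of $\psi$ with $v\psi = u\phi$ is at most $Z[v]$, a torsion count, which is $<N^{1/4}$ because $v \le (k\log N)^2$. So the correlated pairs contribute at most
\[
N\cdot L^2 N^{1/4}(1-2/\chi)^k = N^{1/4+o(1)}\,\mathbb E X,
\]
which is $o((\mathbb E X)^2)=o(N^{16/9})$ because $\mathbb E X\approx N^{8/9}$.

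\textbf{Step 3 (the hard part).} Quantify ``well-distributed'' by Fourier analysis on $H$. Expand the indicator of the good region in Fourier series, or rather a smoothed majorant and minorant of it. The deviation of $p_{\phi\psi}$ from $(1-2/\chi)^2$ is controlled by the Fourier coefficients at nonzero frequencies $(u,v)$ annihilating $H$. The coefficients of an arc indicator decay like $1/|u|$, so truncating at $L\approx (k\log N)^2$ makes the error $O(\chi/L)$ in relative terms. This error is $o(1/k)$, so that $p^k=(1+o(1))(1-2/\chi)^{2k}$. The delicate point is the number of colors $\chi\le N^{1/11}$: the bad arc is tiny, and relative errors must be compared with $2/\chi$ rather than with $1$. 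This is why the threshold $(k\log N)^2$ and the exponent $1/11$ appear, and I would tune $L$ to balance the Fourier tail against the counting bound of Step 2.

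Chebyshev then gives $X>0$ with high probability, and by the reduction at the start this proves $\chi(Z,k)\le\chi$.
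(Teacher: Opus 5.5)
Your route is correct in outline, but it organizes the second moment differently from the paper.

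\textbf{The paper's route.} It smooths $I$ with a heat kernel and controls the smoothing error in $L^1$ using equidistribution of $x\cdot z$ on $\TT_m$. Bad pairs are defined by the size of $|B_{x,y}|$ itself, not by any structure. Their number is bounded by averaging the Fourier error over a uniformly random pair $(x,y)$, which produces the weights $(|aZ\cap bZ|-1)/(|aZ||bZ|)$. Torsion-sparseness enters as $|aZ|\ge N^{3/4}$ for $a\le\delta^{-2}$, and Markov's inequality gives $O(\delta^{-5}N^{5/4})$ bad pairs, each charged $1$.

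\textbf{Your route.} You call a pair bad exactly when a short relation $u\phi+v\psi=0$ exists. You count such pairs deterministically: for each $\phi$ and $(u,v)$ there are at most $Z[|v|]<N^{1/4}$ partners $\psi$, and $v=0$ is excluded by the order of $\phi$. You charge each bad pair only $\Pr[\phi(s)\notin B]^k\approx N^{-1/9}$. For the remaining pairs, a majorant of $I$ with Fourier transform supported in $[-L,L]$ (Beurling--Selberg) makes every nonzero frequency vanish identically. This gives $p_{\phi\psi}\le(|I|+1/(L+1))^2$ with no $L^1$ lemma and no tail estimate. The result is cleaner and leaves more slack in the exponents, at the cost of importing an extremal majorant instead of the paper's self-contained Gaussian smoothing.

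\textbf{What to tighten.}
\begin{enumerate}
\item The $1/|u|$ decay of $\widehat I$ does not justify truncation. Those tails are not summable, and you have no control over annihilating frequencies beyond $L$. So a band-limited majorant is what is actually needed, or a smooth one whose tail beyond $L$ you bound crudely.
\item Errors should be measured against $1-2/\chi\ge 1/3$, not against $2/\chi$. Hence $L=\omega(k)$ suffices, and the exponent $1/11$ does not come from this step.
\item With $L=(k\log N)^2$ and polynomial $\chi$, the correlated contribution is $O(L^2N^{1/4})\,\EE X$ rather than $N^{1/4+o(1)}\EE X$. It is still $o((\EE X)^2)$, since $L^2N^{1/4}\le N^{27/44+o(1)}=o(N^{8/9})$.
\end{enumerate}
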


Little attempt has been made to optimize constants, and essentially none are sharp. The linear dependence of $\frac{\chi}{18} \log N$ for large $\chi$ should be compared to $\Theta(\chi^2 \log N)$, as it is shown in \Cref{prop:hoffman} that for some large constant $C$ the chromatic number of $(Z,C\chi^2 \log N)$ is larger than $\chi$ with high probability.

To contextualize these results, let us discuss the closely related problem of determining the independence number of random Cayley graphs. Let $\ER(N,p)$ denote the Erd\H{o}s-Rényi random graph on $N$ vertices where each edge is included independently with probability $p$, $(Z,p)$ denote the random Cayley graph formed by including each pair $\set{z,-z}$ in $S$ independently with probability $p$, and $\alpha(H)$ denote the size of the largest independent set of a graph $H$. In a series of works \cites{green, greenmorris, campos, nenadov2025remarkindependencenumbersparse}, it has been shown that $$
\alpha(\ZZ_N,p) = (1+o(1))\alpha(\ER(N,p))
$$ 
with high probability for $p > (\log N)^{-1/3 + o(1)}$, with each improvement pushing the result to smaller $p$.
Beyond cyclic groups, it is shown by Alon and Pham in \cites{alon2025randomcayleygraphsrandom} that $\alpha(Z,p) = \widetilde{O}(p^{-3/2})$ with high probability for any $p \le 1/2$. A similar result was shown for general (not necessarily abelian) groups but with a weaker bound by Conlon, Fox, Pham, and Yepremyan in \cite{conlon2024cliquenumberrandomcayley}.

Our results work in a regime far sparser than the above, where the highly dependent edges in Cayley graphs force large, structured independent sets that are not present in the Erd\H{o}s-Rényi model. Indeed, at density $p=\Theta((\log N)/N)$, our results imply that $\alpha(Z,p) = \Omega(N)$ while it is known that $\alpha(\ER(N,p)) = o(N)$ with high probability \cite{friezeind}. Explicitly, denote $p \coloneqq k/N$ and call $Z$ $p$-torsion-sparse if $Z[a] < N^{1/4}$ for all $a \le (pN\log N)^2$. By transference from $(Z,k)$ to $(Z,p)$, \Cref{thm:lowrank} implies that if $Z$ is $p$-torsion-sparse then
$$
\alpha(Z,p) \ge \frac{N}{\chi(Z,p)} = \Omega\bigp{\frac{\log N}{p}}
$$
with high probability for $(\log N)/N \ll p \ll N^{-10/11}$. \Cref{thm:main} also implies that if $p = (\log N)/(65N)$ then $\alpha(Z,p) \ge N/3$ with high probability for any abelian $Z$.

\subsection{Outline of the proof}\label{sec:strat}

A proper $3$-coloring can be produced from a homomorphism $f: Z \to \TT \coloneqq \RR/\ZZ$ or a homomorphism $f: Z \to \ZZ_M$ for some integer $M$ as follows. First partition the torus (say $\TT$) into three intervals of nearly equal length. Then color $z \in Z$ based on the interval that $f(z)$ lies in. Any $z,z' \in Z$ sharing this color are such that $f(z-z') \in [-1/3,1/3]$. If $S$ denotes the random generating set and $f(S) \cup f(-S)$ lies in the middle third of the torus, i.e. $f(S) \cup f(-S) \subset \TT \setminus [-1/3,1/3]$, then this coloring must be proper. The existence of such an $f$ is established via a second moment argument. That is, it is shown that with sufficiently few random generators the expected number of such $f$ tends to infinity, and also that the true number of such $f$ concentrates around this expectation. The family $\cF$ of homomorphisms we search among depends on the amount of small torsion in $Z$.

If $Z[a]$ is small ($< N^{1/4}$) for all polylogarithmic $a$ (the \emph{torsion-sparse} case),
many elements of $Z$ have large order. The cyclic group $\ZZ_N$ is a fundamental example, as $\ZZ_N[a] = (a,N) \le a$. For groups of this type, after fixing an identification between $Z$ and its Pontryagin dual (equivalently, a pairing $\cdot: Z\times Z\to\TT$), we take $\cF$ to be characters of the form $z \to x \cdot z$ for $x \in Z$ with somewhat large order. To close the second moment method we show that most pairs of these characters behave roughly independently using Fourier analysis and smooth approximation of intervals in $\TT$. This allows us to find $x$ such that $x \cdot s$ lands in the middle third for all $s \in S$, so we may then color by pulling back a partition as described above.

If instead $Z[a]$ is large for some small $a$ (the \emph{torsion-dense} case), then there are many homomorphisms $g: Z \to \ZZ_a$. A good example is $\ZZ_2^n$, where $Z[2] = N$. It will turn out that a subcollection of these homomorphisms are well-behaved enough to run the second moment method. Explicitly, we choose an appropriate modulus $M\mid a$ and run the method taking $\cF$ to be all surjective maps $f:Z\to\ZZ_M$. Again with high probability one of these maps all generators into the middle third.

Heuristically, the two regimes are different restrictions of a larger class of homomorphisms. A unified framework would be to let $\cF$ be all homomorphisms $Z \to \ZZ_N$ and capture pairwise dependence of $f,g \in \cF$ by the subgroup
$$
H_{f,g}:= \set{(f(z), g(z)): z \in Z} \le \ZZ_N^2.
$$
Searching among this larger class would lead to sharper constants, but it is not so clear that our desired second moment estimates always hold. We choose to prioritize convenience over sharpness, and a key point is to avoid this bookkeeping by considering subsets of $\cF$ tailored to the torsion properties of $Z$: large-order characters and homomorphisms with a small specified image, respectively. Another simplification is to stratify pairs based on coarser information than $H_{f,g}$. In the torsion-sparse case, working over $\TT$ and smoothing allows us to simply divide pairs into ``good'' and ``bad''. In the torsion-dense case, $d(f,g)$ is a coarse statistic of $H_{f,g}$ that is easier to work with (see the proof of \Cref{thm:highrank}).

The high-level approach draws from that used in \cite{piercing}. In \cite{piercing}, a second moment method is used to find an algebraic subset of a finite field vector space disjoint from a random set of points. Here algebraic subset refers to the joint vanishing set of a collection of polynomials. In the strategy above, containing $f(S)$ in the middle third forces $S$ to be disjoint from $f^{-1}([-1/3,1/3])$. This preimage may be seen as an approximate vanishing set, to be compared with the true vanishing sets in \cite{piercing}.

\subsection{Notation and conventions}

$Z$ will denote an arbitrary finite abelian group of size $N$, and $\ZZ_N$ will denote the cyclic group. With high probability means with probability $1-o(1)$ as $N \to \infty$. $\widetilde{O}$ and $\widetilde{\Omega}$ will be used with arguments that are polynomial in $N$ to suppress factors that are polylogarithmic in $N$. All logarithms are natural.

Denote $\TT \coloneqq \RR/\ZZ$, which may also be identified with the unit interval $[0,1)$ or $[-1/2,1/2)$. Denote by $\TT_m < \TT$ the discrete subgroup isomorphic to $\ZZ_m$, i.e. $\TT_m = \{0,\frac{1}{m}, \dots, \frac{m-1}{m}\}$. For $f: \TT \to \CC$ with $|f| \le 1$, define $\widehat{f}: \ZZ \to \CC$ by
$$
\widehat{f}(a) \coloneqq \int_{\TT} f(x)e(-ax) dx,
$$
where $e(\cdot) = e^{2\pi i \cdot}$. For finite abelian $Z$ we will also fix a nondegenerate symmetric bilinear pairing $Z \times Z \to \TT$ written as $x,z \to x \cdot z$. If one desires to make an explicit choice for this pairing, $Z$ can be written as a direct sum of cyclic factors $\ZZ_a$ and the pairing can act coordinate-wise as
$$
x,z \to \frac{xz}{a} \mod 1, \quad x,z \in \ZZ_a,
$$
summing all coordinates mod 1 to attain a final value in $\TT$.

$S \subset Z$ will denote the random generating set of the undirected Cayley graph where $z,z' \in Z$ are connected if and only if $z - z' \in S \cup (-S)$. $k$ will denote the number of generators sampled to form $S$, and thus is an integer even though floor and ceiling functions may be suppressed in its definition. $\chi \ge 3$ will be an integer denoting the number of colors in a proper coloring. Denote
$$
aZ \coloneqq \set{az: z \in Z}, \quad Z[a] \coloneqq \abs{\set{z: az = 0, z \in Z}}.
$$
These are the image and kernel of the homomorphism $z \to az$, so it follows that $Z[a] |aZ| = N$. $Z[a]$ is also the number of homomorphisms $Z \to \ZZ_a$, as proved in \Cref{lem:highrankbad}. The following useful identities hold for any $a$ dividing $N$ (denoted $a \mid N$):
$$
a\ZZ_N \cong \ZZ_{N/a}, \quad \ZZ_N[a] = a.
$$

\section{Reduction to the torsion-sparse and torsion-dense cases}

First we show how \Cref{thm:main} follows directly from Theorems \ref{thm:highrank} and \ref{thm:lowrank}.

\begin{proof}[Proof of \Cref{thm:main}]
    Set $k' = (9 \log 3)^{-1} \log N$, which corresponds to $\chi = 3$ in \Cref{thm:lowrank}. Then if $Z$ is such that for all $a \le (k'\log N)^2$ it holds that $Z[a] < N^{1/4}$, by \Cref{thm:lowrank} with high probability $\chi(Z,k') \le 3$. But since $k = (\log N)/64 < k'$, one may couple such that $(Z,k) \subseteq (Z,k')$ and so also with high probability $\chi(Z,k) \le 3$. If $Z$ does not satisfy this torsion-sparse assumption, then there exists $a \le (k'\log N)^2 < (\log N)^4$ such that $Z[a] \ge N^{1/4}$, and by \Cref{thm:highrank} with high probability $\chi(Z,k) \le 3$.
\end{proof}

\section{Torsion-sparse groups}

This section proves \Cref{thm:lowrank}. Let $I \coloneqq \TT \setminus [-1/\chi,1/\chi]$. We will also identify $I$ with its indicator function. Any statement involving $I$ holds for any choice of $\chi \ge 3$. For $x,y \in Z$, denote
$$
B_{x} \coloneqq \set{z \in Z: x \cdot z \in I}, \quad B_{x,y} \coloneqq B_x \cap B_y.
$$
We aim to find $x$ such that $S \subset B_x$. Toward this end, a pair $x,y \in Z$ is called
\textit{$\delta$-good} if
$$
\abs{\frac{\abs{B_{x,y}}}{N} - \frac{\abs{B_x}\abs{B_y}}{N^2}} \le 60\delta,
$$
and \textit{$\delta$-bad} otherwise. For our uses $\delta = O((\log N)^{-2})$, and when considering bounded chromatic number we will take $\delta = \Theta ((\log N)^{-2})$.
To bound the number of bad pairs we use Fourier analysis along with a smooth approximation of $I$.

\begin{proposition}[Smooth approximation]\label{prop:approx}
    For any $\delta > 0$ smaller than an absolute constant, there exists an approximation $\tau: \TT \to [0,1]$ of $I$ such that the following holds. For any point $x \in \TT$ at a distance at least $\delta$ from the boundary of $I$
    $$
    \abs{\tau(x) - I(x)} \le \delta.
    $$
    Furthermore, for any $a \in \ZZ$
    $$
    \abs{\widehat{\tau}(a)} \le e^{-a^2 \delta^3}.
    $$
\end{proposition}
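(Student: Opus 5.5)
Convolve the indicator $I$ with a Gaussian kernel of width about $\delta$, periodized to $\TT$. Let $\sigma>0$ be a parameter to be fixed, and let $\phi_\sigma$ be the periodized Gaussian density
$$\phi_\sigma(x)=\sum_{n\in\ZZ}\frac{1}{\sqrt{2\pi}\sigma}e^{-(x+n)^2/(2\sigma^2)}.$$
Its Fourier coefficients are $\widehat{\phi_\sigma}(a)=e^{-2\pi^2\sigma^2a^2}$. Define $\tau=I*\phi_\sigma$. Since $\phi_\sigma\ge0$ has total mass $1$ and $0\le I\le1$, we get $\tau:\TT\to[0,1]$. By the convolution theorem,
$$|\widehat\tau(a)|=|\widehat I(a)|\,e^{-2\pi^2\sigma^2a^2}\le e^{-2\pi^2\sigma^2a^2},$$
using $|\widehat I(a)|\le\|I\|_{L^1}\le1$. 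So the Fourier bound $|\widehat\tau(a)|\le e^{-a^2\delta^3}$ holds as soon as $2\pi^2\sigma^2\ge\delta^3$, for example with $\sigma=\delta^{3/2}/4$.

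For the pointwise estimate, take $x$ at distance at least $\delta$ from $\partial I$. Then the interval $(x-\delta,x+\delta)$ lies entirely in $I$ or entirely in its complement, so $I$ is constant there and equal to $I(x)$. Hence
$$|\tau(x)-I(x)|=\Bigl|\int_\TT (I(x-y)-I(x))\phi_\sigma(y)\,dy\Bigr|\le \int_{\|y\|_{\TT}\ge\delta}\phi_\sigma(y)\,dy .$$
The right side is at most $\Pr(|G|\ge\delta)$ for $G\sim N(0,\sigma^2)$ on $\RR$, since the periodized density is the pushforward of the Gaussian and $\|y\|_\TT\ge\delta$ forces $|y|\ge\delta$ for any real lift. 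By the Gaussian tail bound this is at most $2e^{-\delta^2/(2\sigma^2)}=2e^{-8/\delta}$.

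The only point needing care is the choice of $\sigma$, which must be large enough for the Fourier decay and small enough for the tail bound. The window $\delta^{3/2}\lesssim\sigma\ll\delta$ is nonempty for small $\delta$, and the choice $\sigma=\delta^{3/2}/4$ lies in it. It remains to check $2e^{-8/\delta}\le\delta$, which holds for all $\delta$ below an absolute constant (in fact for all $\delta\le1$). This is the reason for the hypothesis that $\delta$ is smaller than an absolute constant; it also keeps $\delta<1/6$, so that $I$, whose complement has length $2/\chi$, is a genuine interval with well-defined $\delta$-neighborhoods of its two boundary points. The endpoints of $I$ (closed or open) are irrelevant, since they lie within $\delta$ of the boundary and the estimate makes no claim there.
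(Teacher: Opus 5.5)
Your proof is correct and is essentially the paper's argument. The paper also sets $\tau = I * H_t$ with $H_t$ the periodized heat kernel at $t=\delta^3$. It gets the Fourier decay from $|\widehat{I}(a)|\le 1$ and $\widehat{H_t}(a)=e^{-4\pi^2a^2\delta^3}$, and bounds the pointwise error by the Gaussian mass of the kernel on $\|y\|_{\TT}\ge\delta$.

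The differences are cosmetic. Your Gaussian has variance $\delta^3/16$ rather than the paper's $2\delta^3$, which gives the stronger tail $2e^{-8/\delta}$ in place of $2e^{-1/(4\delta)}$. Your aside that $\delta<1/6$ is needed for $I$ to be a genuine interval is unnecessary: the argument only uses that an arc of radius $\delta$ avoiding both boundary points lies in one component of $\TT\setminus\{\pm 1/\chi\}$.
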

The proof is deferred to Appendix \ref{sec:smoothapprox}. The sense in which we will need $\tau$ to approximate $I$ is more directly the following. The proof uses two elementary arguments that will be reused later.

\begin{lemma}[$L^1$ error bound]\label{lem:L1}
    Let $x \in Z$ be an element of order $m$. Then
    $$
    \EE_{z} \abs{\bigp{\tau-I}(x \cdot z)} \le 5\bigp{\delta + \frac{1}{m}}.
    $$
\end{lemma}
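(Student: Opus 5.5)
The plan is to reduce the average over $z \in Z$ to an average over the cyclic subgroup $\TT_m$, and then split that average into points near the boundary of $I$ and points far from it.

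\textbf{Reduction to $\TT_m$.} The map $z \mapsto x\cdot z$ is a homomorphism $Z \to \TT$. Its image is a subgroup of $\TT$ of order equal to the order of $x$, because the pairing is nondegenerate: $j(x\cdot z)=0$ for all $z$ exactly when $jx=0$. So the image is $\TT_m$. Every fiber of a homomorphism has the same size, so $x\cdot z$ is uniformly distributed on $\TT_m$ when $z$ is uniform in $Z$. Hence
$$
\EE_z \abs{(\tau - I)(x\cdot z)} = \frac1m \sum_{j=0}^{m-1} \abs{(\tau-I)(j/m)}.
$$

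\textbf{Splitting the sum.} The boundary of $I$ consists of the two points $\pm 1/\chi$.
\begin{itemize}
\item For points $j/m$ at distance at least $\delta$ from both boundary points, \Cref{prop:approx} gives $\abs{\tau - I} \le \delta$. These points contribute at most $\delta$ to the average.
\item For the remaining points, both $\tau$ and $I$ take values in $[0,1]$, so $\abs{\tau-I}\le 1$. Each boundary point has a neighborhood of length $2\delta$, which contains at most $2\delta m + 1$ points of $\TT_m$. The two neighborhoods together contain at most $4\delta m + 2$ such points, contributing at most $4\delta + 2/m$.
\end{itemize}

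\textbf{Conclusion.} Adding the two contributions gives a bound of $5\delta + 2/m \le 5(\delta + 1/m)$, as claimed.

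The only step needing care is the uniform-distribution claim, which relies on the image of $z \mapsto x\cdot z$ having size exactly $m$. Beyond that, everything is elementary counting; these are the two arguments the paper says will be reused.
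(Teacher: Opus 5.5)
Your proof is correct and follows the paper's argument essentially step for step. The reduction to the uniform distribution on $\TT_m$ via nondegeneracy is the paper's Claim~\ref{cl:uniimage}, and your count of at most $2\delta m+1$ points of $\TT_m$ near each of $\pm 1/\chi$ is the upper bound of Claim~\ref{cl:pointpacking}, giving the same total $5\delta + 2/m$.
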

\begin{proof}
    Let $\partial I + \delta$ denote all points at distance less than $\delta$ from the boundary of $I$. Then for any $t \notin \partial I + \delta$, by \Cref{prop:approx}
    $$
    \abs{\tau(t) - I(t)} \le \delta.
    $$
    
    \begin{claim}\label{cl:uniimage}
        The map $z \to x \cdot z$ from $Z \to \TT$ is a homomorphism whose image is the discrete subgroup $\TT_m \subset \TT$. Furthermore, if $z$ is distributed uniformly in $Z$ then $x \cdot z$ is distributed uniformly in $\TT_m$.
    \end{claim}
    \begin{proof}
    By bilinearity of $\cdot$
    $$
    m(x \cdot z) = mx \cdot z = 0 \cdot z = 0,
    $$
    so $x \cdot z \in \TT_m$. The fact that the image is all of $\TT_m$ is by nondegeneracy of $\cdot$. Indeed, suppose that this is not the case, so that the image is $\TT_d$ for some $d < m$. Then, for all $z \in Z$,
    $$
    dx \cdot z = d(x\cdot z) = 0.
    $$
    But this contradicts nondegeneracy of $\cdot$, as $dx \neq 0$. Uniformity follows from the fact that fibers of $z \to x \cdot z$ are all cosets of the kernel of $z \to x \cdot z$, all of equal size $N/m$.
    \end{proof}
    Using this claim,
    \begin{align}
         &\EE_{z \in Z} \abs{\bigp{\tau-I}(x \cdot z)} \nonumber \\
         &= \EE_{z \in \TT_m} \abs{\bigp{\tau-I}(z)} \nonumber \\
         &= \frac{\abs{\TT_m \cap (\partial I + \delta)}}{m} \EE_{z \in \TT_m \cap (\partial I + \delta)} \abs{\bigp{\tau-I}(z)} + \frac{\abs{\TT_m \cap (\partial I + \delta)^c}}{m} \EE_{z \in \TT_m \cap (\partial I + \delta)^c} \abs{\bigp{\tau-I}(z)} \nonumber \\
         &\le \frac{\abs{\TT_m \cap (\partial I + \delta)}}{m} + \delta. \label{eq:torusbound}
    \end{align}
    To bound the first term, let us bound the number of points of $\TT_m$ contained in any closed interval $J \subset \TT$. 
    \begin{claim}\label{cl:pointpacking}
        Let $J \subset \TT$ be a closed interval. Then the proportion of points of $\TT_m$ inside $J$ is between $\abs{J}-\frac 1 m$ and $\abs{J} + \frac 1 m$.
    \end{claim}
    \begin{proof}
        The number of points of $\TT_m$ inside $J$ is clearly maximized if one point lands exactly at an endpoint of $J$, in which case there are $1+ \floor {|J|/(1/m)}$ points inside $J$. This number is minimized if one point is close to an endpoint but outside $J$, in which case it is $\floor{\abs{J}m}$.
    \end{proof}
    Note that $\partial I + \delta$ is the union of two intervals each of length $2\delta$. Applying the upper bound in the claim to its closure then gives that \eqref{eq:torusbound} is at most
    $$
    2\bigp{2\delta + \frac 1 m} + \delta \le 5\bigp{\delta + \frac 1 m}.
    $$
\end{proof}

The following proposition assumes $Z$ is torsion sparse in the sense that $Z[a]$ is small for all small $a$, forcing many elements to have high order which the smoothing is then able to handle.

\begin{proposition}[Good pairs]\label{prop:bad}
    Let $\delta = \delta(N) = O((\log N)^{-2})$ be positive. Let $Z$ be an abelian group with $\abs{Z} = N$ such that for every positive integer $a\le \delta^{-2}$, we have $Z[a] < N^{1/4}$. Then at least a $1-O(\delta^{-5}N^{-3/4})$ fraction of ordered pairs $(x,y) \in Z^2$ are $\delta$-good.
\end{proposition}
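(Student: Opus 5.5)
The approach is to replace $I$ by its smooth approximation $\tau$ from \Cref{prop:approx}, compute the $\tau$-analogues of $|B_{x,y}|/N$ and $|B_x||B_y|/N^2$ exactly by Fourier expansion, and then control the resulting error terms. Define
$$
T_x \coloneqq \EE_z \tau(x\cdot z), \qquad T_{x,y} \coloneqq \EE_z \tau(x\cdot z)\tau(y\cdot z).
$$
If $x$ and $y$ both have order at least $\delta^{-1}$, then \Cref{lem:L1} gives $\abs{T_x - |B_x|/N} \le 10\delta$. For the pair term we use $|\tau\tau' - II'| \le |\tau-I| + |\tau'-I'|$, valid since all values lie in $[0,1]$; this gives $\abs{T_{x,y} - |B_{x,y}|/N} \le 20\delta$. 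Likewise $\abs{T_xT_y - |B_x||B_y|/N^2} \le 20\delta$. Hence it suffices to show $\abs{T_{x,y} - T_xT_y} \le 20\delta$ for all but a few pairs, with some slack in the constant 60.

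\textbf{Fourier expansion.} Since $\widehat\tau$ decays rapidly, $\tau$ equals its Fourier series, $\tau(t)=\sum_a \widehat\tau(a)e(at)$. Averaging over $z$ and using $\EE_z e((ax+by)\cdot z) = \mathbf 1[ax+by=0]$, which holds by nondegeneracy, we get
$$
T_x = \sum_{a:\, ax=0}\widehat\tau(a), \qquad T_{x,y} = \sum_{(a,b):\, ax+by=0}\widehat\tau(a)\widehat\tau(b).
$$
Thus $T_{x,y}-T_xT_y$ is the sum of $\widehat\tau(a)\widehat\tau(b)$ over pairs with $ax+by=0$ but not both $ax=0$ and $by=0$. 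Truncate at $|a|,|b|\le A$ with $A \approx \delta^{-3/2}\sqrt{\log(1/\delta)}$. The tail is then at most $\delta$ by the Gaussian bound $|\widehat\tau(a)|\le e^{-a^2\delta^3}$ summed over $\ZZ$, which contributes $O(\delta^{-3/2})$ from the other coordinate times a super-small tail. Since $|\widehat\tau|\le 1$, a pair $(x,y)$ is good as soon as there is no nontrivial relation $ax+by=0$ with $0<\max(|a|,|b|)\le A$ other than the trivial ones; if such relations exist it is simply counted as bad.

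\textbf{Counting bad pairs.} A pair is bad only if $x$ has order below $\delta^{-1}$, or $y$ does, or there is a relation $ax+by=0$ with $|a|,|b|\le A$ and $(ax,by)\neq(0,0)$.
\begin{itemize}
\item[(i)] The number of $x$ of order $m\le\delta^{-1}$ is at most $\sum_{m\le \delta^{-1}} Z[m] < \delta^{-1}N^{1/4}$. The pairs with $x$ or $y$ of small order therefore make up a fraction $O(\delta^{-1}N^{-3/4})$.
\item[(ii)] For a fixed $(a,b)$ with $b\ne0$, the number of $(x,y)$ with $ax+by=0$ is $N\cdot Z[|b|]$, since for each $x$ the solutions $y$ form a coset of the $|b|$-torsion or are empty. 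The same holds with the roles of $a$ and $b$ swapped. Since $|b|\le A\le \delta^{-2}$ for small $\delta$, the hypothesis gives $Z[|b|]<N^{1/4}$. Summing over $O(A^2)=O(\delta^{-3}\log(1/\delta))$ choices of $(a,b)$ gives a bad fraction of $O(\delta^{-4}N^{-3/4})$, which is within the claimed $O(\delta^{-5}N^{-3/4})$.
\end{itemize}

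\textbf{Main obstacle.} The delicate step is the tail estimate. We need $\sum_{\max(|a|,|b|)>A}|\widehat\tau(a)||\widehat\tau(b)|\le\delta$ while keeping $A\le\delta^{-2}$, so that the torsion hypothesis applies to every coefficient. This forces the choice $A\sim\delta^{-3/2}\sqrt{\log(1/\delta)}$, which is $\le \delta^{-2}$ because $\delta$ is small. The bound is $O(\delta^{-3/2})\cdot\sum_{|a|>A}e^{-a^2\delta^3}$, and the sum is $\ll \delta^{5/2}$ once $A^2\delta^3\ge 4\log(1/\delta)$.

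A further check is that the $a=0$ or $b=0$ terms are handled correctly. With $b=0$ and $a\ne0$, the relation is $ax=0$ and it matches the corresponding term in $T_xT_y$. So the only excess terms have $ab\ne0$ and $ax=-by\ne0$, and these are exactly the ones counted in (ii).
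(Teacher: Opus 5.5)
Your proof is correct. You handle the two outer error terms and the removal of elements of order below $\delta^{-1}$ exactly as the paper does. Your treatment of the middle term $\abs{T_{x,y}-T_xT_y}$, however, is genuinely different.

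The paper never truncates pointwise. It computes the first moment $\EE_{x,y}\abs{T_{x,y}-T_xT_y}$, using
\[
\Pr(ax+by=0)-\Pr(ax=0)\Pr(by=0)=\frac{\abs{aZ\cap bZ}-1}{\abs{aZ}\abs{bZ}}\le(\abs{aZ}\abs{bZ})^{-1/2}.
\]
It bounds the Gaussian-weighted double sum by the square of a single sum and splits that sum at $\delta^{-2}$, below which $\abs{aZ}\ge N^{3/4}$. This gives $O(\delta^{-4}N^{-3/4})$. Markov at level $20\delta$ then costs the extra $\delta^{-1}$ in the stated $O(\delta^{-5}N^{-3/4})$.

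You instead kill the Fourier tail deterministically for all pairs at once. You then replace the moment computation by a union bound over the $O(A^2)$ short relations. Each relation holds with probability at most $Z[\abs{b}]/N=1/\abs{bZ}<N^{-3/4}$, which is the same fact the paper uses in the form $\min(\abs{aZ},\abs{bZ})^{-1}$. Note that your count $N\,Z[\abs{b}]$ is an upper bound, not an equality.

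What your route gains:
\begin{itemize}
\item an explicit description of the bad pairs, namely those satisfying a relation $ax=-by\neq 0$ with $0<\abs{a},\abs{b}\le A$;
\item a slightly stronger bad fraction, $O(\delta^{-3}\log(1/\delta)N^{-3/4})$, with no Markov loss.
\end{itemize}
What the paper's route gains is a single computation that treats all frequencies uniformly. There the torsion hypothesis enters only through $\abs{aZ}\ge N^{3/4}$.

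The step you flag as the main obstacle is not actually delicate. Take $A=\lfloor\delta^{-2}\rfloor$. The tail is then super-polynomially small, as in the paper's estimate $\sum_{a>\delta^{-2}}e^{-\delta^3a^2}\le e^{-1/(2\delta)}$. The torsion hypothesis still covers every $\abs{b}\le A$, and the union bound gives $O(\delta^{-4}N^{-3/4})$, already within the claim.
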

\begin{proof}
Set $m = 1/\delta$. We will consider sampling $x,y$ uniformly and independently with replacement and show that with probability at least $1-O(\delta^{-5}N^{-3/4})$ the pair $x,y$ is $\delta$-good. By the definition of $\delta$-good and the triangle inequality it suffices to bound each of the following quantities by $20 \delta$ with probability at least $1-O(\delta^{-5}N^{-3/4})$:
\begin{align}
    \abs{\EE_z I(x \cdot z)I(y \cdot z) - \EE_z \tau(x \cdot z)\tau(y \cdot z)},\label{eq:first}\\
\abs{\EE_z \tau(x \cdot z)\tau(y \cdot z) - \EE_z \tau(x \cdot z) \EE_z \tau(y \cdot z)},\label{eq:second}\\
\abs{\EE_z \tau(x \cdot z) \EE_z \tau(y \cdot z) - \EE_z I(x \cdot z) \EE_z I(y \cdot z)}. \label{eq:third}
\end{align}

For \eqref{eq:first} and \eqref{eq:third}, we use the inequality
$$
\abs{ab-cd} = \abs{b(a-c)+c(b-d)} \le \abs{b}\abs{a-c} + \abs{c}\abs{b-d}.
$$
If $x$ and $y$ both have order at least $m = 1/\delta$, then applying this to \eqref{eq:first} along with \Cref{lem:L1} and the fact that $\tau,I$ are 1-bounded gives that
\begin{align*}
    &\abs{\EE_z I(x \cdot z)I(y \cdot z) - \EE_z \tau(x \cdot z)\tau(y \cdot z)} \le \EE_z \abs{I(x \cdot z)I(y \cdot z)-\tau(x \cdot z)\tau(y \cdot z)}\\
    &\le \EE_z \abs{I(y \cdot z)}\abs{I(x \cdot z) - \tau(x \cdot z)} + \EE_z \abs{\tau(x \cdot z)}\abs{I(y \cdot z) - \tau(y \cdot z)} \le 20\delta.
\end{align*}
Similarly for \eqref{eq:third},
\begin{align*}
    &\abs{\EE_z \tau(x \cdot z) \EE_z \tau(y \cdot z) - \EE_z I(x \cdot z) \EE_z I(y \cdot z)} \\
    &\le \abs{\EE_z \tau(y \cdot z)}\abs{\EE_z (\tau-I)(x \cdot z)} + \abs{\EE_z I(x \cdot z)}\abs{\EE_z (\tau- I)(y \cdot z)} \le 20\delta.
\end{align*}
To bound the number of elements of order less than $m$, we use our torsion-sparse assumption. Indeed the number of such elements is at most
$$
\sum_{a=1}^m Z[a] < mN^{1/4},
$$
as $m = \delta^{-1} \le \delta^{-2}$ for sufficiently large $N$. Thus by the union bound the probability that either of $x,y$ has order less than $m$ is at most $(2N^{1/4})/(\delta N)= O(\delta^{-1}N^{-3/4}) = O(\delta^{-5}N^{-3/4})$ as desired.

It remains to bound \eqref{eq:second}. For $f,g: \TT \to \CC$ equal to either $\tau$ or $1$ and for $x,y \in Z$, we may expand
    \begin{align*}\label{eq:expansion}
        \EE_{z \in Z} f(x \cdot z)g(y \cdot z) - \EE f \EE g &= \sum_{\substack{a,b \in \ZZ \\ (a,b) \neq (0,0)}} \widehat{f}(a)\widehat{g}(b) \EE_z e(z \cdot (ax + by)) \nonumber\\
        &= \sum_{\substack{a,b \in \ZZ \\ (a,b) \neq (0,0)}} \widehat{f}(a)\widehat{g}(b) 1_{ax+by = 0},
    \end{align*}
    where we have used the fact that $\cdot$ is symmetric, bilinear, and nondegenerate. Applying this three times, with $f=g= \tau$, then $f=\tau,g \equiv 1$, and finally $f \equiv 1, g = \tau$ gives that
\begin{align*}
        &\EE_z \tau(x \cdot z)\tau(y \cdot z) - \EE_z \tau(x \cdot z)\EE_z \tau(y \cdot z)\\
        &= \sum_{\substack{a,b \in \ZZ \\ (a,b) \neq (0,0)}} \widehat{\tau}(a)\widehat{\tau}(b) \bigp{1_{ax+by = 0} - 1_{ax=0}1_{by=0}}.
\end{align*}
To apply Markov's inequality, let us bound the first moment. Recalling that $x,y$ are independent and uniformly distributed, $ax$ is uniformly distributed on $aZ$ and $by$ is independently uniform on $bZ$ (fibers of $x \to ax$ are all cosets of size $Z[a]$, and similarly for $b$). Then
    $$
    \EE_{xy} \bigp{1_{ax+by = 0} - 1_{ax=0}1_{by=0}} = \frac{\abs{aZ \cap bZ}}{\abs{aZ}\abs{bZ}} - \frac{1}{\abs{aZ}\abs{bZ}},
    $$
    because the only pairs $ax,by \in aZ \times bZ$ that sum to zero are of the form $w,-w \in aZ \cap bZ$.
    Then by the Fourier decay estimate of \Cref{prop:approx},
    \begin{equation}\label{eq:firstmom}
        \EE_{x,y} \abs{\sum_{\substack{a,b \in \ZZ \\ (a,b) \neq (0,0)}} \widehat{\tau}(a)\widehat{\tau}(b) \bigp{1_{ax+by = 0} - 1_{ax=0}1_{by=0}}} \le 4 \sum_{\substack{a,b \in \NN}} \frac{\abs{aZ \cap bZ}-1}{\abs{aZ}\abs{bZ}} e^{-\delta^3(a^2 + b^2)},
    \end{equation}
    where the terms with $a=0$ or $b=0$ are zero as the indicators cancel. Now note that
    $$
    \frac{\abs{aZ \cap bZ}-1}{\abs{aZ}\abs{bZ}} \le \min\bigp{\frac{1}{\abs{aZ}}, \frac{1}{\abs{bZ}}} \le \frac{1}{\sqrt{\abs{aZ}\abs{bZ}}},
    $$
    so that \eqref{eq:firstmom} is at most
    $$
    4 \bigp{\sum_{\substack{a \in \NN}} \abs{aZ}^{-1/2} e^{-\delta^3a^2}}^2.
    $$
    The tail of the inner sum decays quickly regardless of $Z$, as by the Gaussian integral tail bound \Cref{lem:gaussint} with $s = 1/\sqrt{\delta}$
    $$
    \sum_{a > \delta^{-2 }}\abs{aZ}^{-1/2} e^{-\delta^3a^2} \le \int_{\delta^{-2}}^\infty e^{-\delta^3x^2} dx = \delta^{-3/2} \int_{\delta^{-1/2}}^\infty e^{-u^2} du \le 2\delta^{-3/2} e^{-\frac{3}{4\delta}} \le e^{-1/(2\delta)}
    $$
    for sufficiently small $\delta$. Note also that $Z[a]|aZ| = N$ by considering the homomorphism $z \to az$, so that for every $a \le \delta^{-2}$ we have $|aZ| \ge N^{3/4}$ by the torsion-sparse assumption. Thus \eqref{eq:firstmom} is at most
    \begin{equation*}\label{eq:firstmombound}
        4 \bigp{e^{-1/(2\delta)} + \delta^{-2}N^{-3/8}}^2 = O(\delta^{-4}N^{-3/4}),
    \end{equation*}
    as $e^{-1/(2\delta)}$ decays super-polynomially in $N$ for $\delta = O((\log N)^{-2})$.
    Markov's inequality then yields that the probability that \eqref{eq:second} is at least $20\delta$ is $O(\delta^{-5}N^{-3/4})$.

\end{proof}
\begin{remark}
    One can see here the use of the torsion-sparse assumption. If $Z=\ZZ_2^n \oplus \ZZ_3$, then the $a=b=2$ term in the right-hand side of \eqref{eq:firstmom} is
    $$
    \frac{\abs{2Z}-1}{\abs{2Z}^2} e^{-8\delta^3} = \frac 2 9 e^{-8\delta^3} \ge \frac{1}{5}
    $$
    for sufficiently large $N$, so \eqref{eq:firstmom} would not decay at all. This example is a bit nuanced, as one could use Alon's trick to project onto $\ZZ_2^n$ and eliminate this obstruction. However, there are more complex examples that involve taking direct sums of elementary $p$-groups of the first $\log \log N$ primes $p$ that exhibit this same obstruction and for which projection cannot help.
\end{remark}

We now recall and prove the torsion-sparse coloring theorem, which in particular shows that $3$-colorability with $\Omega(\log N)$ generators is attainable for any torsion-sparse abelian group. In fact, in this regime, we can consider unbounded chromatic number, and obtain good dependence of the constant on the number of colors (see \Cref{prop:hoffman}).

\td*
\begin{proof}
    Let $S$ denote the random generating set. Set $\alpha \coloneqq \abs{I} = 1- 2/\chi$. We will show that with high probability there is some $x$ such that $S \subset B_x$, which also implies that $-S \subset B_x$ by symmetry of $I$. If this is the case then a proper coloring is given by partitioning $\TT$ into $\chi$ pairwise disjoint intervals of length $1/\chi$ and labeling $z \in Z$ with the interval of $\TT$ that $x \cdot z$ lies in. Any two vertices $z,z'$ sharing this label have the property that $x \cdot (z - z') \in [-1/\chi,1/\chi]$, so $z - z' \notin S \cup (-S)$ and there is no edge between $z$ and $z'$ in the Cayley graph.

    For any $x \in Z$ of order $m$, by Claim \ref{cl:pointpacking}
    \begin{equation}\label{eq:singlegood}
        \abs{\frac{\abs{B_x}}{N} - \alpha} \le \frac 1 m.
    \end{equation}
    Now set $m \coloneqq 1/\delta \coloneqq k \log N$ and let $\cH \subset Z$ be the elements with order at least $m$. Let $X$ be the random variable counting $x \in \cH$ such that $S \subset B_x$, so it suffices to show that with high probability $X \neq 0$. In the proof of \Cref{prop:bad}, where we also set $m = 1/\delta$, it was shown that $\cH$ constitutes a $1-O(\delta^{-1}N^{-3/4})$ fraction of $Z$ under a torsion-sparse assumption of exactly the strength we now have, as $\delta^{-2} = (k\log N)^2$. Then
    $$
    \delta^{-1}N^{-3/4}= mN^{-3/4} = o(1),
    $$
    where we have used that $\chi \le N^{1/11}$ so that $m = \widetilde{O}(N^{1/11}) = o(N^{3/4})$. Thus $\cH$ is almost all of $Z$. Combining this fact with \eqref{eq:singlegood} yields that
    \begin{align}\label{eq:firstmomlower}
        \EE X &= \sum_{x \in \cH} \pr{S \subset B_x} = \sum_{x \in \cH} \bigp{\frac{\abs{B_x}}{N}}^k \ge \bigp{1-o(1)}N\bigp{\alpha - \frac 1 m}^k \nonumber \\
        &= (1-o(1))N\alpha^k = (1-o(1))N^{8/9} \to \infty
    \end{align}
    with our chosen value $k = -(9\log \alpha)^{-1} \log N$, where we have also used that $m = \omega(k)$.

    For the second moment now let $\cG$ denote the set of pairs $(x,y) \in \cH^2$ that are $\delta$-good. Then the contribution of such good pairs to $\EE X^2$ is
    \begin{align*}
        \sum_{(x,y) \in \cG} \pr{S \subset B_x \cap B_y} &= \sum_{(x,y) \in \cG} \bigp{\frac{\abs{B_{x,y}}}{N}}^k \\
        &= \sum_{(x,y) \in \cG} \bigp{\frac{\abs{B_{x}}\abs{B_y}}{N^2} + O(\delta)}^k \\
        &\le (1+O(\delta))^k \bigp{\EE X}^2\\
        &= (1+o(1))\bigp{\EE X}^2,
    \end{align*}
    where we have used that $\delta^{-1} = \omega(k)$ and the fact that $x,y \in \cH$ so that \eqref{eq:singlegood} gives $\frac{\abs{B_{x}}\abs{B_y}}{N^2} = \Omega(1)$. Turning to the $\delta$-bad pairs, by \Cref{prop:bad} there are at most $O(\delta^{-5}N^{5/4})$ of them. Thus the second moment can be split into the contribution of good and bad pairs, yielding
    $$
    \EE X^2 \le (1+o(1)) \bigp{\EE X}^2 + O\bigp{\delta^{-5}N^{5/4}} = (1+o(1))\bigp{\EE X}^2,
    $$
    provided that $\frac{O\bigp{\delta^{-5}N^{5/4}}}{\bigp{\EE X}^2} = o(1)$. Indeed, by \eqref{eq:firstmomlower}
    $$
    \frac{\delta^{-5}N^{5/4}}{\bigp{\EE X}^2} \le 2\bigp{\delta^{5}N^{3/4}\alpha^{2k}}^{-1} = 2\bigp{\delta^{5}N^{3/4}N^{-2/9}}^{-1} = o(1)
    $$
    by our choice of $k$ and assumption that $\chi \le N^{1/11}$ so that $\delta^{5}N^{3/4} = (k\log N)^{-5}N^{3/4} = \widetilde{\Omega}(N^{-5/11}N^{3/4}) = \Omega(N^{1/4})$. The second moment method then says that with high probability $X$ is close to its expectation, and hence positive by \eqref{eq:firstmomlower}. Explicitly, for any fixed $\eps > 0$, Chebyshev gives that
    $$
    \pr{\abs{X-\EE X} \ge \eps \EE X} \le \frac{\EE X^2 - \bigp{\EE X}^2}{\eps^2 \bigp{\EE X}^2} = o(1).
    $$
\end{proof}

To show that the bound of \Cref{thm:main} is asymptotically tight, and that the $\chi$ dependence in \Cref{thm:lowrank} is only at most a linear factor off, we upper bound the independence number $\alpha(Z,k)$. One should thus think of $\alpha = 1/\chi$ below. The proof is reproduced from \cite{chromcay}.

\begin{proposition}[Hoffman bound]\label{prop:hoffman}
    There exists an absolute constant $C > 0$ such that the following holds. Let $G$ be a finite group of order $N$ and $\alpha = \alpha(N)$ be such that $N^{-1/4} \le \alpha \le 1$. Then if $k = C\alpha^{-2}\log N$ generators are chosen uniformly, with high probability $\alpha(G,k) \le \alpha N$.
\end{proposition}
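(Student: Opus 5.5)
The plan is to prove this via the expander mixing lemma, following Alon. Let $A = \frac12(A_S + A_{S^{-1}})$ be the normalized adjacency operator of the random Cayley graph, viewed as a multigraph of degree $2k$. Concretely, $A$ is the average of $2k$ random permutation operators $x \mapsto xs$ and $x \mapsto xs^{-1}$. Write $A = \frac{1}{2k}\sum_{i=1}^k (R_{s_i} + R_{s_i^{-1}})$, where $R_g$ is right translation on $\ell^2(G)$. Each summand $\frac12(R_{s_i}+R_{s_i^{-1}})$ is self-adjoint and has norm at most $1$. Its expectation over uniform $s_i$ is the projection $J/N$ onto constants.

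On the orthogonal complement $\mathbf{1}^\perp$, the terms $T_i = \frac12(R_{s_i}+R_{s_i^{-1}}) - J/N$ are independent, mean-zero, self-adjoint, and bounded by $2$ in norm. The matrix Bernstein (Ahlswede--Winter) inequality in dimension $N$ then gives
$$
\pr{\Big\|\frac1k\sum_i T_i\Big\| \ge \lambda} \le 2N\exp(-c k\lambda^2).
$$
This is $o(1)$ once $k \ge C\lambda^{-2}\log N$ for a large constant $C$. I will take $\lambda = \alpha/2$. So with high probability every nontrivial eigenvalue of $A$ has absolute value at most $\alpha/2$. This is exactly the Alon--Roichman theorem, and the proof in \cite{chromcay} proceeds this way.

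Next, apply the Hoffman ratio bound to the regular multigraph (including loops and multiplicities). An independent set $U$ with $|U| = u$ satisfies $\langle 1_U, A 1_U\rangle = 0$. Decompose $1_U = \frac uN \mathbf{1} + g$ with $\|g\|^2 = u - u^2/N$. Then
$$
0 = \frac{u^2}{N} + \langle g, Ag\rangle \ge \frac{u^2}{N} - \frac{\alpha}{2}\Bigl(u - \frac{u^2}{N}\Bigr),
$$
so $u \le \frac{\alpha/2}{1+\alpha/2} N < \alpha N$.

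Two points need care. First, loops arising from $s_i = e$ and the suppression of multiplicities in the actual graph. Loops mean the multigraph's independent sets might differ, so I will argue directly. The simple Cayley graph has an edge $\{x, xs\}$ whenever the multigraph does, apart from loops. If $U$ is independent in the simple graph, then $\langle 1_U, A1_U\rangle$ counts only loop contributions. That is at most $(\#\{i : s_i = e\}/k)\,u$. With high probability, when $N^{-1/4} \le \alpha$, the number of such $i$ is at most $k\alpha/4$; indeed it is $0$ with probability $1 - k/N = 1 - o(1)$. The same computation then still gives $u \le \alpha N$. Second, the constraint $\alpha \ge N^{-1/4}$ only guarantees $k \le CN^{1/2}\log N$, which keeps the loop bound and the union-type estimates valid.

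The main obstacle is the concentration step, specifically making sure the matrix Chernoff bound applies to non-abelian $G$ uniformly. It does, because it only uses the representation by $N\times N$ matrices of norm at most $1$, and the dimension factor $N$ is absorbed by $\log N$ in $k$.
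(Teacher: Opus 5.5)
Your argument is correct and follows the same route as the paper: an Alon--Roichman spectral bound (which you prove inline via matrix Bernstein rather than cite), then Hoffman's ratio bound, with your threshold $\alpha/2$ for the normalized operator playing the role of the paper's $C'\sqrt{k\log N}$ against degree $d\ge k/2$. The one real difference is in your favor: you apply both the spectral bound and the Hoffman computation to the multigraph operator $A$, using only that an independent set of the simple graph has loop-only quadratic form there, whereas the paper applies the eigenvalue bound directly to $(G,k)$ with multiplicities suppressed, which can fail as stated (e.g.\ in $\ZZ_2^n\oplus\ZZ_3$, collapsing the doubled edges of the roughly $k/3$ generators with $s=-s$ moves the eigenvalue at a nontrivial character of $\ZZ_3$ to about $-k/3$).
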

\begin{proof}
    It is shown in \cite{alonroich} that there exists some constant $C'$ such that with high probability all nontrivial eigenvalues of $(G,k)$ are at most $C'\sqrt{k\log N}$ in magnitude. Note also that $(G,k)$ is an undirected regular graph with degree $d \le 2k$, and with high probability we may assume $d \ge k-O(k^2/N) \ge k/2$ for sufficiently large $N$ by our lower bound on $\alpha$. Hoffman's ratio bound \cite{hoffman} on the independence number of a $d$-regular graph with smallest (most negative) eigenvalue $\lambda_N$ then says that
    $$
    \alpha(G,k) \le N \frac{-\lambda_N}{d-\lambda_N} \le N \frac{C'\sqrt{k\log N}}{k/2} \le \frac{2C'}{\sqrt{C}}\alpha N \le \alpha N
    $$
    for $C \ge 4C'^2$.
\end{proof}

\section{Torsion-dense groups}

This section proves \Cref{thm:highrank}. In the previous regime, since $k = \Omega(\log N)$ for any choice of $\chi$ in \Cref{thm:lowrank}, one always needs to assume $Z[a]$ is small for all $a \le (\log N)^4$, up to constants. Thus in this regime we will be assuming there exists an integer $a$ such that $a \le (\log N)^4$ and $Z[a] \ge N^{1/4}$, although the specific values of $(\log N)^4$ and $N^{1/4}$ are not so important for the result to hold. Such an assumption naturally points us to a large collection of homomorphisms into $\ZZ_a$ amongst which to search for one such that $f(S) \subseteq I$, where now $I$ is roughly the middle third of the discrete torus $\ZZ_M$.

In order to obtain good concentration, we search only among surjective homomorphisms $f: Z \to \ZZ_{M}$ for a carefully chosen modulus $M \mid a$, working in the discrete torus instead of $\TT$ as we will not need any smooth approximation. $M$ must be chosen to balance two things. Taking large $M$ is advantageous as it allows for more homomorphisms in our search space (although this relation is only monotone in the sense that $Z[M]$ is). On the other hand taking small $M$ is sometimes necessary to bound the number of pairs of homomorphisms that are strongly dependent, and there are examples where $M = a$ does not suffice. As will be seen shortly, the key statistic governing the dependence between $f,g: Z \to \ZZ_M$ is $\abs{g(K)}$ where $K$ is the kernel of $f$. The smaller $g(K)$ is, the more aligned $f,g$ are and the less randomness is left in $g(z)$ after conditioning on $f(z)$.

\begin{lemma}[Homomorphism count]\label{lem:highrankbad}
    Let $M$ be a positive integer and let $\cF$ be the collection of surjective homomorphisms $Z \to \ZZ_M$. Then
    $$
    \abs{\cF} \ge Z[M] - \sum_{\substack{t \mid M \\ t < M}} Z[t].
    $$
    Furthermore, if $f \in \cF$ and $K \le Z$ denotes the kernel of $f$, then the number of $g \in \cF$ such that $g(K) \subseteq d\ZZ_M$ for $d \mid M$ is at most $dZ[M/d]$.
\end{lemma}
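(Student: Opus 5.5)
First I will establish the preliminary fact promised in the notation section: $\abs{\mathrm{Hom}(Z,\ZZ_M)} = Z[M]$. Write $Z \cong \bigoplus_i \ZZ_{a_i}$. A homomorphism $\ZZ_{a_i} \to \ZZ_M$ is determined by the image of $1$, which can be any element of $\ZZ_M[a_i]$; there are $\gcd(a_i,M)$ such elements. Similarly, the elements of $\ZZ_{a_i}$ killed by $M$ number $\gcd(a_i,M)$. Taking products over $i$ gives $\abs{\mathrm{Hom}(Z,\ZZ_M)} = \prod_i \gcd(a_i,M) = Z[M]$.

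For the first inequality, note that a non-surjective homomorphism $f: Z\to\ZZ_M$ has image a proper subgroup of $\ZZ_M$. Any proper subgroup is cyclic of some order $t \mid M$ with $t<M$, so $f$ is a homomorphism $Z \to \ZZ_t$ composed with the unique embedding $\ZZ_t \hookrightarrow \ZZ_M$. Hence the number of non-surjective maps is at most $\sum_{t\mid M,\,t<M} \abs{\mathrm{Hom}(Z,\ZZ_t)} = \sum Z[t]$. Subtracting this from $Z[M]$ gives the bound on $\abs{\cF}$.

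For the second statement, fix $f\in\cF$ with kernel $K$, so that $Z/K\cong \ZZ_M$ via $f$. Suppose $g\in\cF$ satisfies $g(K)\subseteq d\ZZ_M$. Let $\pi:\ZZ_M\to \ZZ_M/d\ZZ_M\cong\ZZ_d$ be the projection. Then $\pi\circ g$ vanishes on $K$, so it factors through $f$: we have $\pi\circ g = h\circ f$ for some $h\in \mathrm{Hom}(\ZZ_M,\ZZ_d)$, and there are at most $d$ choices for $h$. Now fix $h$ and suppose $g,g'$ both induce it. Then $\pi\circ(g-g')=0$, so $g-g'$ is a homomorphism $Z\to d\ZZ_M\cong \ZZ_{M/d}$. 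There are at most $Z[M/d]$ such homomorphisms by the count above. Thus each fiber over $h$ has size at most $Z[M/d]$, and the total is at most $dZ[M/d]$.

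The only step requiring care is this fibering argument: one must check that factoring through $f$ is legitimate, which uses surjectivity of $f$, and that differences of two lifts land in $d\ZZ_M\cong\ZZ_{M/d}$. Everything else is the standard count over a cyclic decomposition.
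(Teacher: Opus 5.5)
Your proof is correct and follows essentially the same route as the paper: the cyclic-decomposition count $\abs{\mathrm{Hom}(Z,\ZZ_M)}=\prod_i \gcd(a_i,M)=Z[M]$, the union bound over the proper subgroups $\ZZ_t$, and fibering $\{g : g(K)\subseteq d\ZZ_M\}$ over the reduction mod $d$ with fibers contained in translates of $\mathrm{Hom}(Z,d\ZZ_M)\cong\mathrm{Hom}(Z,\ZZ_{M/d})$. The only differences are cosmetic. The paper parametrizes the reduction by $g(u) \bmod d$ for a fixed $u$ with $f(u)=1$, while you factor $\pi\circ g$ through $f$, a view the paper also mentions. The paper also states the count as an equality, whereas you prove only the upper bound, which is all the lemma needs.
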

\begin{proof} 

    Fix any $f \in \cF$ and let $K$ be its kernel. We will show that in fact the number of homomorphisms $g: Z \to \ZZ_M$ such that $g(K) \subseteq d\ZZ_M$ is equal to $dZ[M/d]$, which clearly gives the desired upper bound on surjective $g$. Fix any homomorphism $g$. Using the surjectivity of $f$, fix $u$ such that $f(u) = 1$. Then any $z \in Z$ can be written as
    $$
    z = f(z)u + k, \quad k \in K,
    $$
    where $f(z)$ is thought of as an integer. Indeed, taking $k \coloneqq z - f(z)u$,
    $$
    f(k) = f(z) - f(z)f(u) = 0.
    $$
    Then
    $$
    g(z) \equiv f(z) g(u) \mod d,
    $$
    as $g(k) \in d\ZZ_M$ and all elements of $d\ZZ_M$ are multiples of $d$ by definition. Thus $g$ is completely determined mod $d$ after choosing $g(u) \mod d$, for which we have $d$ choices. After fixing this choice, the remaining freedom in $g$ is just to add any homomorphism whose values are all multiples of $d$ (the values of $g(k)$), i.e., a homomorphism to $d\ZZ_M \cong \ZZ_{M/d}$. We claim the number of such homomorphisms is exactly $Z[M/d]$. Combining this claim with the $d$ choices for $g(u) \mod d$ would then give the desired count. This count can also be seen by projecting $\ZZ_M \to \ZZ_M/d\ZZ_M \cong \ZZ_d$ and applying the first isomorphism theorem.
    
    To show the claim, we argue that in general the number of homomorphisms $Z \to \ZZ_a$ is $Z[a]$ for a positive integer $a$. Write
    $$
    Z = \bigoplus_n \ZZ_n.
    $$
    Then $Z[a] = \prod_n \ZZ_n[a]$ as the order of an element divides $a$ if and only if the order of each of its coordinates divides $a$. For a given $n$, it is easy to check that $\ZZ_n[a] = (n,a)$. Turning to the number of homomorphisms $Z \to \ZZ_a$, it can be computed by multiplying the number of homomorphisms $\ZZ_n \to \ZZ_a$ across $n$. But the number of homomorphisms $\ZZ_n \to \ZZ_a$ is also equal to $(n,a)$, so the factors of both products come in equal pairs.

    We turn now to the total number of surjective homomorphisms. The number of all homomorphisms $Z \to \ZZ_M$ is just $Z[M]$ by the above. For a homomorphism not to be surjective, it must map $Z$ to a proper subgroup of $\ZZ_M$. These proper subgroups are given by $\ZZ_t$ for divisors $t$ of $M$ with $t < M$, so the result follows by the union bound and the above fact that the number of homomorphisms $Z \to \ZZ_t$ is $Z[t]$.
\end{proof}

The following ``fact about numbers'' will be used to choose the modulus $M$ with desirable properties. It does not use any property of $Z[\cdot]$ as a function except that it is positive and $Z[1] = 1$.

\begin{lemma}\label{lem:powerpicker}
    Suppose there exist positive constants $c,C$ such that $Z[a] \ge N^{c}$ for some positive integer $a \le (\log N)^{C}$. Then there exists a divisor $M$ of $a$ with $M \neq 1$ such that for every proper divisor $1 \le t < M$ of $M$ it holds that
    \begin{equation}\label{eq:ugap}
        \log Z[M] - \log Z[t] \ge \frac{\log Z[a]}{4t} + \sqrt{\log N}
    \end{equation}
    for $N \ge N_0(c,C)$.
\end{lemma}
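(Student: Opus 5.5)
The plan is a greedy descent along the divisor lattice of $a$: start from $M_0 \coloneqq a$ and repeatedly pass to a smaller divisor whenever \eqref{eq:ugap} fails. Termination is automatic, and we will show the process can never reach $1$ by tracking how much $\log Z[\cdot]$ can drop along the way. Write $L \coloneqq \log Z[a] \ge c\log N$. Note that $Z[1] = 1$, so $\log Z[1] = 0$.

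\emph{The procedure.} Given a current modulus $M_j \mid a$ with $M_j \ne 1$, check whether \eqref{eq:ugap} holds for every proper divisor $t$ of $M_j$. If it does, output $M = M_j$. Otherwise pick a violating proper divisor $t$ and set $M_{j+1} \coloneqq t$. A violation means
$$
\log Z[M_{j+1}] > \log Z[M_j] - \frac{L}{4M_{j+1}} - \sqrt{\log N}.
$$
Since $M_{j+1} \mid M_j$ and $M_{j+1} < M_j$, we have $M_{j+1} \le M_j/2$. So the chain $a = M_0 > M_1 > \cdots$ is strictly decreasing and has length at most $\log_2 a \le (C/\log 2)\log\log N$. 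In particular, the process stops after at most $r$ steps, where $r \le \log_2 a$. It either outputs a valid $M \ne 1$, or it reaches $M_r = 1$. It remains to rule out the second outcome.

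\emph{Ruling out $M_r = 1$.} Telescoping the violation inequalities from $M_0 = a$ gives
$$
0 = \log Z[1] = \log Z[M_r] > L - \frac{L}{4}\sum_{i=1}^{r} \frac{1}{M_i} - r\sqrt{\log N}.
$$
Because each $M_{i}$ divides $M_{i-1}$ with ratio at least $2$, and $M_r = 1$, we get $M_{r-s} \ge 2^s$. Hence $\sum_{i=1}^r 1/M_i \le \sum_{s\ge 0} 2^{-s} = 2$. Substituting, $0 > L/2 - r\sqrt{\log N} \ge \frac{c}{2}\log N - O_C(\log\log N\sqrt{\log N})$, which is false for $N \ge N_0(c,C)$. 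So the descent halts at some $M \ne 1$ with $M \mid a$ satisfying \eqref{eq:ugap} for all proper divisors $t$, as required.

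\emph{Main obstacle.} The only delicate point is the accounting step: the penalty $L/(4t)$ is largest for small $t$, and in particular equals $L/4$ at the final step $t = 1$. The geometric decay of the chain, which follows from divisibility, is exactly what bounds the total penalty by $L/2 < L$. The additive $\sqrt{\log N}$ terms are harmless because the chain has only $O(\log\log N)$ links.
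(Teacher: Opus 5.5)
Your proposal is correct and is essentially the paper's argument: the paper phrases it as a contradiction (if no valid $M$ exists, every $1 \neq m \mid a$ has a violating divisor $t(m)$, giving a chain $a = m_0 > m_1 > \cdots > m_n = 1$), then telescopes exactly as you do. It uses the same two bounds, $\sum_i 1/m_i \le 2$ from halving along divisors and chain length $O_C(\log\log N)$, so your greedy descent is just the constructive phrasing of the same proof.
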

\begin{proof}
    Suppose this is not the case, and so for every $1 \neq m \mid a$ we may find a divisor $1 \le t(m) < m$ of $m$ such that
    $$
    \log Z[m] - \log Z[t(m)] < \frac{\log Z[a]}{4t(m)} + \sqrt{\log N}.
    $$
    Then setting $m_0 = a$, one can construct a strictly decreasing chain $m_{i+1} = t(m_i)$ terminating at $m_n = 1$ (note that $\log Z[1] = 0$). Then
    $$
    \log Z[a] = \sum_{i=1}^{n} \log Z[m_{i-1}] - \log Z[m_{i}] < \frac{\log Z[a]}{4} \sum_{i=1}^{n} \frac{1}{m_{i}} + \sum_{i=1}^{n} \sqrt{\log N}.
    $$
    Consider the first sum. Since $m_{i+1}$ is a proper divisor of $m_i$ it holds that $m_i \ge 2m_{i+1}$ and, iterating from $m_n = 1$, we obtain $m_{n-j} \ge 2^j$. So
    $$
    \sum_{i=1}^{n} \frac{1}{m_{i}} \le \sum_{j=0}^\infty 2^{-j} = 2.
    $$
    For the second sum, note that by the same reasoning $n$ is at most the number of times $a$ needs to be divided by $2$ to reach $1$, so $n = O_C(\log \log N)$ by assumption on $a$. Thus
    $$
    \log Z[a] < \frac{\log Z[a]}{2} + O_C\bigp{\sqrt{\log N} \log \log N}.
    $$
    But this contradicts the assumption that $\log Z[a] \ge c\log N$ for sufficiently large $N$, depending only on $c,C$.
\end{proof}

In this regime our approach does not obtain good dependence on the number of colors $\chi$, so the result is only stated for the smallest value, $\chi = 3$. One route to obtaining better $\chi$ dependence may be to consider homomorphisms $Z \to \ZZ_M^d$ for $d > 1$, but to keep the proof as simple as possible (and since bounded $\chi$ is our main concern) we do not pursue this here.

\tdense*
\begin{proof}
    Let $S$ denote the random generating set and $k= (\log N)/64$ the number of generators sampled. Invoke \Cref{lem:powerpicker} with the $a$ given by assumption to obtain the modulus $M \mid a$ satisfying \eqref{eq:ugap}. Set $J \coloneqq \{-(\ceil{M/3}-1), \dots, \ceil{M/3}-1\}$ and let $I \coloneqq \ZZ_M \setminus J$ be (roughly) the middle third with $\alpha = |I|/M$. We will show that with high probability there is some surjective homomorphism $f$ such that $f(S) \subseteq I$, so also $f(-S) \subset I$ by symmetry of $I$. If $M$ is $2$ or $4$ then we will in fact be able to 2-color by bipartitioning $\ZZ_M$. If $M$ is not $2$ or $4$ then tripartition $\ZZ_M$ into three intervals of nearly equal size (either $\floor{M/3}$ or $\ceil{M/3}$). Then we may color $z \in Z$ with the part that $f(z)$ lies in. Any two vertices $z,z'$ sharing a color have the property that $f(z - z') \in J$ by its definition, so $z - z' \notin S \cup (-S)$ and there is no edge between $z$ and $z'$ in the Cayley graph.
        
    Let us proceed with the second moment method. We begin by lower-bounding the first moment. Let $\cF$ denote the collection of surjective homomorphisms $f: Z \to \ZZ_{M}$ and $X$ be the random variable counting $f \in \cF$ such that $f(S) \subset I$. Note that, by \eqref{eq:ugap},
    $$
    \sum_{\substack{t \mid M \\ t < M}} \frac{Z[t]}{Z[M]} \le Me^{-\sqrt{\log N}} \le ae^{-\sqrt{\log N}} = o(1).
    $$
    Thus, by \Cref{lem:highrankbad}, it holds that $\abs{\cF} \ge (1-o(1))Z[M]$. Note that applying \eqref{eq:ugap} with $t=1$ gives that $Z[M] \ge Z[a]^{1/4} \ge N^{1/16}$. Combining this with surjectivity of all $f \in \cF$ gives that
    \begin{align*}
        \EE X = \sum_{f \in \cF} \pr{f(S) \subseteq I} = \abs{\cF} \alpha^k &\ge (1-o(1))N^{1/16}\alpha^k \\
        &\ge (1-o(1))N^{1/16}4^{-k} \\
        &\ge (1-o(1))N^{1/16}N^{-1/32} \to \infty
    \end{align*}
    with our chosen value of $k = \frac{1}{64} \log N$, where we have used that $\alpha \ge 1/4$ (attained at $M=4$).

    For the second moment,
    $$
    \EE X^2 = \sum_{f,g \in \cF} \pr{f(S) \subseteq I,g(S) \subseteq I} = \alpha^k \sum_{f,g \in \cF}\pr{g(z) \in I | f(z) \in I}^k
    $$    
    where the probability is with respect to a uniformly random $z \in Z$. We will use \Cref{lem:highrankbad} to analyze $\pr{g(z) \in I | f(z) \in I}$ and bound the number of bad pairs where it is larger than $\alpha$. Fixing some $f \in \cF$, if $K = K(f)$ denotes the kernel of $f$, then $Z$ may be partitioned into cosets of $K$ where $f$ takes a distinct constant value over each coset. Fixing any $x \in \ZZ_M$ (and in particular in $I$), the event $f(z) = x$ thus conditions $z$ to be uniformly random in a coset $u + K$ for some $u \in Z$. Note that $g(K) \le \ZZ_M$ is a subgroup and thus isomorphic to a cyclic group. We may thus write $g(K) = d\ZZ_M$ where $d = d(f,g) \mid M$ can be thought of as encoding how much $f,g$ align, with larger $d$ indicating more alignment. Then if $z$ is uniform on a coset $u + K$, $g(z)$ is uniform on the coset $g(u) + d\ZZ_M \subseteq \ZZ_M$, which has size $M/d$. We consider separately the contributions to the second moment coming from independent pairs $f,g$ where $d(f,g) = 1$ and from dependent pairs where $d(f,g) > 1$, with the former accounting for nearly all of the second moment and the latter being negligible.
    
    Let us start with independent pairs. If $d=1$, then the coset $g(u) + \ZZ_M$ is just all of $\ZZ_M$, so by definition of $\alpha$
    $$
    \pr{g(z) \in I | f(z) \in I} = \alpha.
    $$
    The conditioning changes nothing and $f,g$ behave truly independently. Thus
    $$
    \alpha^k \sum_{f \in \cF} \sum_{\substack{g \in \cF \\ g(K) = \ZZ_M}} \pr{g(z) \in I | f(z) \in I}^k = \alpha^{2k}\sum_{f \in \cF} \abs{\set{g \in \cF: g(K) = \ZZ_M}} \le \alpha^{2k} \abs{\cF}^2.
    $$
    
    We turn now to dependent pairs. If $d > 1$, since $I$ is an interval in $\ZZ_M$ and $d\ZZ_M \cong \ZZ_{M/d}$
    $$
    \abs{\pr{g(z) \in I | f(z) \in I} - \alpha} \le \frac{d}{M},
    $$
    by conditioning on $f(z) = x$ for any $x \in I$ and applying a discrete analog of the argument in Claim \ref{cl:pointpacking}.
    Thus by \Cref{lem:highrankbad} and recalling that $\abs{\cF} \ge (1-o(1))Z[M]$
    \begin{align*}
         &\alpha^k \sum_{f \in \cF} \sum_{\substack{1 \neq d \mid M}} \sum_{\substack{g \in \cF \\ g(K) = d\ZZ_{M}}} \pr{g(z) \in I | f(z) \in I}^k \\
         &\le \alpha^k \sum_{\substack{1 \neq d \mid M}} \sum_{f \in \cF} \abs{\set{g \in \cF: g(K) = d\ZZ_{M}}}\bigp{\alpha + \frac{d}{M}}^k\\
         &\le \alpha^{2k} \abs{\cF}^2 \sum_{\substack{1 \neq d \mid M}} (1+o(1))d\frac{Z[M/d]}{Z[M]}\bigp{1 + \frac{d}{\alpha M}}^k\\
         &\lesssim \alpha^{2k} \abs{\cF}^2 \sum_{\substack{1 \neq d \mid M}} d \exp \bigp{\log Z[M/d] - \log Z[M]  + 4k\frac{d}{M}},
    \end{align*}
    where the last inequality uses $\alpha \ge 1/4$ and $1+x \le e^x$ for $x > -1$. Reparametrize $M/d \to t$, so that for any proper divisor $t$ of $M$ the exponent of the summand is
    $$
    \log Z[t] - \log Z[M] + \frac{4k}{t}.
    $$
    We can now upper bound this exponent uniformly in $t$, and hence $d$. Indeed, by \eqref{eq:ugap} this exponent is at most
    $$
    \frac{16k -\log Z[a]}{4t} - \sqrt{\log N} \le - \sqrt{\log N}
    $$
    since $\log Z[a] \ge \frac 1 4 \log N \ge 16k$. Then, because $\sum_{1 \neq d \mid M} d \le M^2 \le a^2 \le (\log N)^8$, the whole second moment contribution from dependent pairs is at most
    $$
    O\bigp{\alpha^{2k} \abs{\cF}^2 (\log N)^8 e^{- \sqrt{\log N}}} = o\bigp{\alpha^{2k}\abs{\cF}^2}.
    $$
    
    Putting the two contributions together, we have
    $$
    \EE X^2 \le \alpha^{2k} \abs{\cF}^2 + o\bigp{\alpha^{2k} \abs{\cF}^2} = (1+o(1))\alpha^{2k} \abs{\cF}^2 = (1+o(1))\bigp{\EE X}^2.
    $$
    The second moment method then says that with high probability $X$ is close to its expectation, and, in particular, positive.
\end{proof}

\begin{remark}
    Roughly speaking, we needed $\frac{Z[M/d]}{Z[M]}$ to decay rather rapidly for the above proof to work, and certainly to at least decay for every $1 \neq d \mid M$. This is where the use of the torsion-dense assumption can be seen. If $Z = \ZZ_N$ is cyclic, then
    $$
    \frac{Z[M/d]}{Z[M]} = \frac{(M/d, N)}{(M,N)} \ge \frac 1 d = \Omega(1)
    $$
    for any bounded $d$.
\end{remark}

\subsection*{Acknowledgements}

I would like to thank Daniel Altman for suggesting approximate vanishing sets as an analog of the vanishing sets considered in our joint work, which served as the starting point for the application presented here. I would also like to thank James Leng for references on Fourier smoothing and Jacob Fox and Sarah Peluse for helpful discussions.

\appendix

\section{Smooth approximation}\label{sec:smoothapprox}

This section proves \Cref{prop:approx} by convolution with a Gaussian kernel. For any $t > 0$, define the heat kernel $H_t: \TT \to \RR$ via its Fourier coefficients
\begin{equation}\label{eq:heat}
    H_t(x) \coloneqq \sum_{n \in \ZZ} e^{-4\pi^2 n^2 t}e(nx).
\end{equation}
Equivalently, $H_t$ is the periodization of the heat kernel on $\RR$ defined by
$$
\cH_t(x) \coloneqq \frac{1}{\sqrt{4 \pi t}}e^{-\frac{x^2}{4t}}.
$$
Indeed, by Poisson summation \cite{stein}
\begin{equation}\label{eq:poisum}
    H_t(x) = \sum_{n \in \ZZ} \cH_t(x+n),
\end{equation}
recalling the identification $\TT = [0,1)$. Finally,
\begin{equation}\label{eq:integral}
    \int_\TT H_t(x) dx = 1.
\end{equation}

\begin{lemma}[Gaussian integral bound]\label{lem:gaussint}
    For any $x,s > 0$,
    $$
    \int_{x}^\infty e^{-u^2} du \le \sqrt{\pi} e^{s^2/4 - sx}.
    $$
\end{lemma}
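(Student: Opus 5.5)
The plan is the standard Chernoff-type exponential trick. For $u \ge x$ and any $s > 0$ we have $(u - s/2)^2 \ge 0$, which rearranges to $-u^2 \le s^2/4 - su$. Hence
$$
e^{-u^2} \le e^{s^2/4 - su} \quad \text{for all } u \in \RR.
$$

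Integrating this pointwise bound from $x$ to $\infty$ gives
$$
\int_x^\infty e^{-u^2}\,du \le e^{s^2/4}\int_x^\infty e^{-su}\,du = \frac{1}{s}\,e^{s^2/4 - sx}.
$$
This already has the claimed shape, except the prefactor is $1/s$ rather than $\sqrt{\pi}$. So the only remaining issue is the constant, and I would handle it by splitting into two cases according to the size of $s$.

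\emph{Case $s \ge 1/\sqrt{\pi}$.} Then $1/s \le \sqrt{\pi}$, and the bound above suffices.

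\emph{Case $s < 1/\sqrt{\pi}$.} Here I would avoid the $1/s$ loss by using the trivial bound $\int_x^\infty e^{-u^2}\,du \le \int_0^\infty e^{-u^2}\,du = \sqrt{\pi}/2$. This suffices whenever $s^2/4 - sx \ge -\log 2$, i.e., when $x$ is not too large relative to $1/s$.

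This second case is the main obstacle: when $s$ is small and $x$ is huge, the trivial bound fails. So I would instead use a cleaner argument that covers all $s$ at once. Substitute $u = x + v$ with $v \ge 0$, so that $u^2 = x^2 + 2xv + v^2$. Then write
$$
-u^2 = -(v + s/2)^2 + (s^2/4 - sx) - \bigl[x^2 + 2xv - sv - sx\bigr] + \dots
$$
and aim to show directly that $-u^2 \le -(u - s/2)^2 + s^2/4 - su$. This yields
$$
\int_x^\infty e^{-u^2}\,du \le e^{s^2/4}\int_x^\infty e^{-(u-s/2)^2 - s(u - x)}\,e^{-sx}\,du \le e^{s^2/4 - sx}\int_\RR e^{-(u-s/2)^2}\,du = \sqrt{\pi}\,e^{s^2/4 - sx}.
$$

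To justify the pointwise inequality $e^{-u^2} \le e^{-(u-s/2)^2}\,e^{s^2/4 - sx}$ for $u \ge x$, I would verify that the exponent difference equals $-us + sx \le 0$ for $u \ge x$. Indeed, $-u^2 + (u - s/2)^2 = -su + s^2/4$, and $-su + s^2/4 \le s^2/4 - sx$ exactly when $u \ge x$. This is immediate, so after writing the Gaussian integral over $\RR$, the proof of the lemma is complete with no case split needed.
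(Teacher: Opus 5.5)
Your final argument is correct and is essentially the paper's proof. The pointwise bound $e^{-u^2}1_{u\ge x}\le e^{s^2/4-sx}e^{-(u-s/2)^2}$ is exactly the Markov/Chernoff step $1_{u\ge x}\le e^{s(u-x)}$ applied to the variance-$1/2$ Gaussian density $\pi^{-1/2}e^{-u^2}$, with the moment generating function computed by completing the square rather than quoted; the preliminary $1/s$ estimate, the abandoned case split, and the unfinished identity ending in ``$+\dots$'' are unnecessary and can be dropped.
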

\begin{proof}
    Recognizing the density of a Gaussian random variable $X$ with mean zero and variance $1/2$,
    $$
    \int_{x}^\infty \frac{1}{\sqrt{\pi}} e^{-u^2} du = \pr{X \ge x} = \pr{e^{sX} \ge e^{sx}} \le \frac{\EE e^{sX}}{e^{sx}} \le e^{s^2/4 - sx},
    $$
    using Markov's inequality and the moment generating function of $X$.
\end{proof}

\begin{proof}[Proof of \Cref{prop:approx}]
    Define the approximation $\tau \coloneqq I \ast H_{t}$ for $t = \delta^3$. Then by \eqref{eq:heat} for any $a \in \ZZ$
    $$
    \widehat{H_t}(a) = e^{-4 \pi^2 a^2 \delta^3} \le e^{-a^2 \delta^3}.
    $$
    The Fourier coefficient bound in \Cref{prop:approx} then follows from the fact that $\lvert \widehat{I}(a) \rvert \le 1$ for all $a \in \ZZ$ and that $\widehat{\tau}(a) = \widehat{I}(a) \widehat{H_t}(a)$. 
    
    Turning to the approximation statement, consider first the case that $x \in I$ but at least $d$ away from the boundary. Then by \eqref{eq:poisum} and \eqref{eq:integral}
    \begin{align*}
         1-\tau(x) &= \int_\TT H_t(x-y)(1-I(y)) dy = \int_{\TT \setminus I} H_t(x-y)dy \\
         &\le \int_{\norm{y}_{\TT} \ge d} H_t(y) dy = \sum_{n \in \ZZ} \int_d^{1-d} \cH_t(n+y) dy \le 2 \int_d^\infty \cH_t(y) dy,
    \end{align*}
    where the last inequality is by symmetry and nonnegativity of $\cH_t$. Recognizing $\cH_t(y)$ as the density of a Gaussian with mean zero and variance $2t$ and applying a Chernoff bound with $s = d/(2t)$ (or using \Cref{lem:gaussint}) gives that this is at most $2\exp (-d^2/(4t))$. Finally plugging in our chosen value $t = \delta^3$ and taking $d = \delta$ gives a bound that is less than $\delta$ provided $\delta$ is sufficiently small. 
    
    If $x \notin I$ and is at distance at least $d$ from the boundary, then by \eqref{eq:integral}
    \begin{align*}
         \tau(x) &= \int_\TT H_t(x-y)I(y) dy = \int_{I} H_t(x-y)dy \le \int_{\norm{y}_{\TT} \ge d} H_t(y) dy.
    \end{align*}
    The rest proceeds identically to the previous case.
\end{proof}

\bibliographystyle{amsplain}
\bibliography{Biblio.bib}

\end{document}